\newcommand{\RR}{\mathbb{R}}
\newcommand{\ZZ}{\mathbb{Z}}
\newcommand{\CC}{\mathbb{C}}
\newcommand{\Cl}{\mathscr{C}}
\theoremstyle{plain}
\newtheorem{definition}{Definition} [section]
\newtheorem{corollary}[definition]{Corollary}
\newtheorem{proposition}[definition]{Proposition}
\newtheorem{theorem}[definition]{Theorem}
\newtheorem{lemma}[definition]{Lemma}
\newtheorem{example}[definition]{Example}
\newtheorem{remark}[definition]{Remark}
\title{Scattering Networks on Noncommutative Finite Groups}
\author{Maria Teresa Arias, Davide Barbieri, Eugenio Hernández}
\date{\today
}
\begin{document}
	\maketitle
	
	\begin{abstract}
 Scattering Networks were initially designed to elucidate the behavior of early layers in Convolutional Neural Networks (CNNs) over Euclidean spaces and are grounded in wavelets. In this work, we introduce a scattering transform on an arbitrary finite group (not necessarily abelian) within the context of group-equivariant convolutional neural networks (G-CNNs). We present wavelets on finite groups and analyze their similarity to classical wavelets. We demonstrate that, under certain conditions in the wavelet coefficients, the scattering transform is non-expansive, stable under deformations, preserves energy, equivariant with respect to left and right group translations, and, as depth increases, the scattering coefficients are less sensitive to group translations of the signal, all desirable properties of convolutional neural networks. Furthermore, we provide examples illustrating the application of the scattering transform to classify data with domains involving abelian and nonabelian groups.
\\
		\noindent\textbf{Keywords:} scattering, machine learning, CNNs, equivariance, groups.
	\end{abstract}

	\tableofcontents

\section{Introduction}
\label{sec:intro}
Many interesting problems involve analyzing and manipulating structured data, often real-valued functions defined on domain sets with some structure. This is the case of image recognition, where convolutional neural networks (CNNs) have been extremely successful, giving rise to many generalizations and to a great interest in understanding how they work.

The concept of \textit{scattering transform} was originally introduced in \cite{Mallat11} in $L^2(\mathbb{R}^d)$. It can be viewed as a CNN which builds a representation of the data having provable properties. It has a multi-layer structure and uses convolution with wavelets $\psi_\lambda$ to propagate signals and convolution with a filter $\phi$ to generate the features (See Figure~\ref{fig:original_scattering_transform}).
\begin{figure}[h!]
	\centering
	\includegraphics[width=\textwidth]{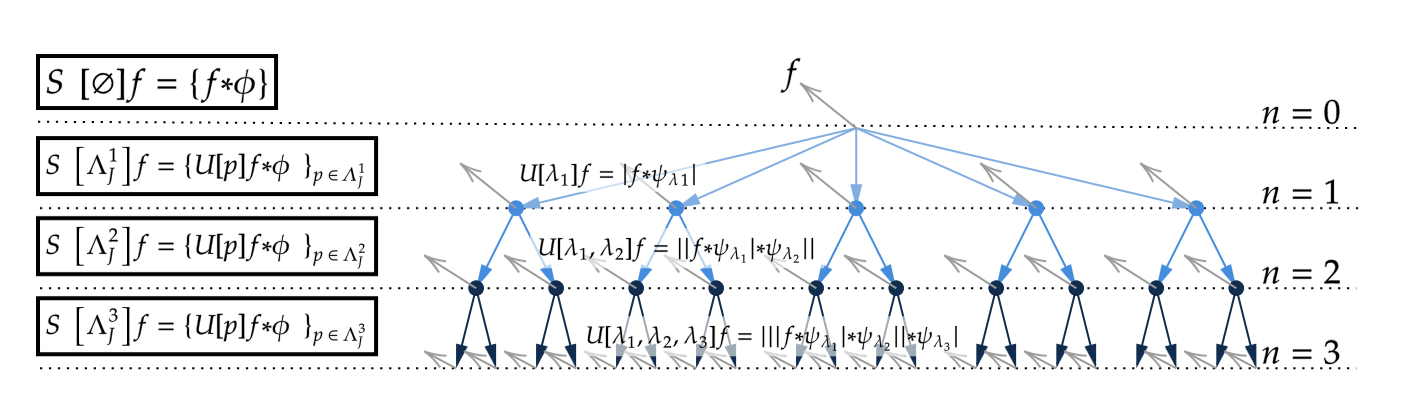}
	\caption{The structure of the scattering transform of a signal $f$.}
	\label{fig:original_scattering_transform}
\end{figure}

In this work, we introduce a scattering transform for signals in $L^2(G)$, where $G$ is a finite group, not necessarily abelian. This generalization is motivated by the need to analyze data exhibiting non-commutative symmetries, which arise naturally in various contexts. For instance, in cryptography, signals defined in the symmetric group $S_n$, representing permutations of data, inherently possess a noncommutative structure. Furthermore, the underlying symmetries of certain data transformations, such as those associated with wavelet or time-frequency analysis, can often be more accurately modeled using affine groups over finite fields. Indeed, extending the scattering transform to this setting is a natural progression, given that convolutional structures are not merely sufficient, but also necessary, for equivariance to the action of a compact group \cite{KondorTrivedi}. To achieve this, we introduce a family of wavelets on $L^2(G)$, adapted to the group structure, and use them to construct the group scattering transform. Our framework is based on representation theory and harmonic analysis, providing a rigorous foundation for the analysis of these signals. We prove that the resulting $G$-scattering transform exhibits desirable equivariance properties, respecting the underlying group symmetries, and demonstrate its stability, ensuring robustness to small perturbations in the input signal.

\subsection{Results}
Section \ref{ap:wavelet} introduces and motivates our definition of $G$-wavelets, drawing connections to existing wavelet constructions in the literature. A $G$-wavelet $\psi$ is defined as a linear combination of the characters $\{\chi^\pi: \pi \in \widehat G\}$ of the group $G$, taking into account the degree, $d_\pi$, of the unitary representations $\pi$ that give rise to the character $\chi^\pi(x) = Tr (\pi(x)),\ x\in G$. That is (see Definition \ref{Def2-2})
$$
\psi(x) = \sum_{\pi\in \widehat G} d_\pi \gamma_\psi(\pi)) \chi^\pi(x)\, \quad x\in G\,.
$$
The coefficients $\{\gamma_\psi(\pi): \pi \in \widehat G\}$ will play an important role in proving the result described below.

We define the $G$-wavelet transform of $f \in L^2(G)$ as a group convolution with a family of $G$-wavelets $\{\psi_j\}_{j = 0}^J \subset L^2(G)$, by
\[
W f (j,x) = \frac{1}{|G|} \sum_{h \in G} \psi_j(h) f(h^{-1}x) = \psi _j \ast f(x)\,.
\]
Under the Calder\'on type condition 
\begin{equation} \label{eq:Calderon-intro}
\mathscr{C}(\pi) =\sum_{j=0}^J |\gamma_{\psi_j}(\pi)|^2 = 1\,,  \quad \forall\ \pi \in \widehat G\,,
\end{equation}
the family of group translations of the $G$-wavelets form a Parseval frame,  i.e.
\begin{equation} \label{eq:parseval1}
\sum_{j = 0}^J \|W f (j,\cdot)\|_{L^2(G)}^2 = \|f\|_{L^2(G)}^2 \, , \ \forall \ f \in L^2(G)\,.
\end{equation}
In fact, conditions \eqref{eq:Calderon-intro} and \eqref{eq:parseval1} are equivalent (see Theorem \ref{teo:2.3} for a more general result).

The $G$-wavelet transform has the desirable property of being equivariant under both left and right group translations. That is, if $L_g f(x) = f(g^{-1}x)$ is the left translation operator and $R_g f(x) = f(xg)$ is the right translation operator in the group $G$, then, for all $g\in G$ and all $f\in L^2(G)$, it holds that $W(L_g f)= L_g W(f)$ and $W(R_g f)= R_g W(f)$ (see Subsection \ref{Subsec:4-3}).

Section \ref{section a} presents our notion of group scattering transform on finite groups: for a Parseval $G$-wavelet frame $\{\psi_j\}_{j = 0}^J \subset L^2(G)$, that is, one that satisfies \eqref{eq:parseval1}, and denoting by $\phi = \psi_0$, for any $m \in \mathbb{N}$ we define
\[
S[\emptyset]f = \phi \ast f \, , \ S[j_1, \dots, j_m]f = \phi \ast |\psi_{j_m} \ast \dots |\psi_{j_1} \ast f|\dots|.
\]
Here $m$ denotes the \emph{depth} of the convolutional structure. We invite the reader to see Figure \ref{fig:example} that contains a graphical description of this transform.

In Section \ref{section b}, we prove the key structural properties of this transform. Namely, with Lemma \ref{lemma4-1-New} we prove non-expansivity, that is
\[
\|Sf\|^2 := \|\phi \ast f\|_{L^2(G)}^2 + \sum_{m = 1}^\infty \sum_{j_1, \dots j_m \in \{1, \dots, J\}} \|S[j_1, \dots, j_m]f\|^2_{L^2(G)} \leq \|f\|^2_{L^2(G)} \, , \ \forall \ f \in L^2(G).
\]
This is a nontrivial property that ensures that, despite the exponential growth of the number of coefficients of the scattering transform, the total energy is controlled by the norm of the original signal. In particular, this implies that for all practical purposes, one can restrict to a finite depth of the scattering coefficients since they will become negligibly small as depth increases. We also prove, with Proposition \ref{prop Stability}, stability with respect to variations on the signals, namely we prove that
\[
\|Sf - Sf'\|^2 \leq \|f - f'\|^2_{L^2(G)} \, , \ \forall \ f, f' \in L^2(G).
\]
Note that, since $S$ is not linear, this is not a direct consequence of the non-expansivity.

A finer property that we prove, with Theorem \ref{th:energy}, is energy preservation, that is
\[
\|Sf\|^2 = \|f\|^2_{L^2(G)} \, , \ \forall \ f \in L^2(G)\,.
\]
This holds for the scattering transform of a Parseval frame wavelet if, in addition, we assume that the function  $\phi= \psi_0$ satisfies the admissibility condition $\beta := \min_{\pi\in \widehat G} |\gamma_\phi(\pi)|^2 >0$ (see Definition \ref{admissible1}). This same admissibility condition also ensures injectivity of the scattering transform. We will further discuss in Section \ref{relaxation} a relaxed admissibility condition, which relies on a structure property of the unitary irreducible representations of finite groups.

Due to the underlying equivariance property of the wavelet construction, we show with Proposition \ref{eq:equivariance2} that the group scattering transform is equivariant with respect to left and right group translation. Finally, under the previously mentioned admissibility condition, we prove in Theorem \ref{th:approxinv} that as depth increases, an approximate invariance with respect to group translations emerges. More precisely, we show that there exists a positive constant $\alpha < 1$ and a constant $C > 0$ such that
\[
\sum_{j_1, \dots j_m \in \{1, \dots, J\}} \|S[j_1, \dots, j_m] L_gf - S[j_1, \dots, j_m]f\|^2_{L^2(G)} \leq C \alpha^m \|f\|^2_{L^2(G)}
\]
for all $f \in L^2(G)$. This indicates that the scattering coefficients at increasing depths are less and less sensitive to group translations of the signal, which is a desirable property in convolutional neural networks.

Section \ref{numerical results} provides three illustrative examples of applications.  In the first one we refine the results of \cite{zou} in the context of the MNIST dataset (Section \ref{mnist}). After choosing an appropriate wavelet frame, we construct the scattering transform for abelian translations and show accuracy of more than 97\% in the classification. In the second example (Section \ref{classsound}), we demonstrate the utility of the scattering transform for the class of groups $\operatorname{Aff}(\mathbb{F}_p)$ in audio classification of signals belonging to two different categories (audio of barks and meows from dogs and cats). Here we first represent audio signals on affine groups by means of time-scale wavelet representation. This classical representation provides very poor results for classification with linear separation such as SVM. In this example, we show how to construct a group wavelet transform whose associate group scattering transform produces classification of the sound dataset with more than 87\% accuracy. In the third and last example (Section \ref{symmetricMoreover}) we consider a novel classification problem on groups: that of functions on symmetric groups. We first consider distance functions that appear in problems from cryptography, then consider random functions. In each case, we show how to use the representation theory of the symmetric group to construct a group scattering transform that is adapted to the problem.

\subsection{Related work}
The original scattering transform is defined for signals in $L^2(\mathbb{R}^d)$.
The wavelets are constructed from a mother wavelet $\psi\in L^2(\RR^d)\cap L^1(\RR^d)$, dilating and rotating it with $\lambda=2^j r$, where $j < J$ and $r$ is an element of a finite rotation group $G$, to obtain 
\begin{equation}
	\psi_{\lambda}(x)=2^{dj}\psi(2^jr^{-1}x).
\end{equation}
The convolution with the scaling function
\begin{equation}
	\phi_J(x)=2^{-dJ}\phi(2^{-J}x)
\end{equation}
localizes a function over spatial domains of size proportional to $2^J$. Such a scattering transform is a non-expansive operator and approximately invariant to translation and rotation: under strong admissibility assumptions on the wavelet and the scaling function, as the coarsest scale tends to infinity, the scattering transform is fully invariant to translations and rotations. Moreover, it is Lipschitz continuous with respect to smooth deformations. However, in dimensions greater than $1$, no wavelets has been reported satisfying such strong admissibility conditions and for the case of $L^2(\RR)$, it has been proved \cite{waldspurger2016exponential} that similar results can be achieved with much less restrictive conditions on the wavelets than the ones used in \cite{Mallat11}. The problem of the stability has been further analysed in \cite{NICOLA2023122}. There is a complementary theory to scattering of functions on $L^2(\RR^d)$ \cite{wiatowski2017mathematical} which encompasses at each layer convolution with general semi-discrete frames in the role of wavelets, general Lipschitz-continuous non-linearities in the role of the modulus, and that adds a general Lipschitz-continuous pooling operator, emulating sub-sampling and averaging.

In applied settings, classification algorithms are commonly employed in conjunction with principal component analysis (PCA) in the scattering transform. State-of-the art classification results were obtained for instance, for handwritten digit recognition and texture classification \cite{bruna2013classification}, as well as for musical genre and phone classification \cite{Anden_2014}. The method demonstrates particular promise in low-data regimes, where the number of training samples is limited, for example, it has been effectively utilized for the classification of underwater objects based on sonar signal analysis \cite{saito2017underwater}. However it has certain limitations: in \cite{cotter2017visualizing} it was shown that traditional scattering networks are not able to represent complex, edge-like patterns, which constrains their representational capacity for certain classes of visual patterns. The scattering transform has been also used to study certain types of random processes \cite{Bruna_2015}.

Beyond translations and rotations, additional classes of symmetries can, in principle, be incorporated into the scattering framework. Mallat’s original formulation of the scattering transform is theoretically flexible enough to ensure stability under the action of a compact Lie group $G$ on $L^2(\RR^d)$. However, in practice, these generalized constructions are computationally prohibitive. They rely on Littlewood–Paley decompositions on compact manifolds (see \cite{stein}) and unitary wavelet transforms on compact Lie groups \cite{Geller11}, which are challenging to implement efficiently or at scale. As a result, practical implementations tend to focus on more tractable forms of invariance. For instance, \cite{Sifre2013RotationSA} developed a scattering transform invariant to rotations, scalings, and deformations for texture discrimination, while \cite{salas2021rotation} proposed a computationally feasible architecture with built-in rotation invariance. More recently, \cite{gauthier2022parametric} introduced a learnable extension of the scattering transform based on Morlet wavelets, allowing the scales, orientations, and aspect ratios of the filters to be optimized for specific tasks.

There are a large and increasing number of papers dealing with generalizations to other domains in addition to $L^2(\RR^d)$. There are non-Euclidean extensions of the scattering transform. A geometric scattering is defined on compact, smooth, connected Riemann manifolds without boundary \cite{pmlr-v107-perlmutter20a} and on Measure Spaces \cite{chew2022geometric}, this last having properties of stability and invariance, but not energy preserving. This architecture uses wavelets similar to diffusion wavelets \cite{maggioni}. Scattering has also been generalized to graphs in \cite{zou}, where they use wavelets built in \cite{hammond} using the eigenfunctions of the Laplace operator of the graph. In Section \ref{conection} we describe the relationship between such wavelets on graphs and the $G$-wavelet introduced in the present paper.


\

\textbf{Acknowledgments:} This project has received funding from the European Union's Horizon 2020 research and innovation programme under the Marie Sklodowska-Curie Grant Agreement No 777822 and  Grants PID2019-105599GB-I00,  PID2022-142202NB-I00 / AEI
/ 10.13039/501100011033

 \section{G-wavelets}\label{ap:wavelet}
In this section we are going to design wavelets on finite groups. We will start by recalling some notations, and refer to \cite[Chapt. 15]{terras_1999} for the basic results of Fourier analysis on finite groups needed in this section.

Let $G$ be a finite group with identity $\mathbf{1}$. For $x\in G$, its conjugacy class is $[x] = \{yxy^{-1} | y \in G\}$.

A representation of $G$ is an homomorphism $\pi: G \to GL(\mathcal{H}_\pi)$, where $\mathcal{H}_\pi$ is a vector space over $\CC$ of dimension $d_\pi$. The number $d_\pi$ is called the degree of the representation $\pi$. Note that from $\pi(xy)=\pi(x)\pi(y)$ for all $x,y \in G$, one can deduce $\pi(\mathbf{1})=I_{d_\pi}$, where $I_{d_\pi}$ is the $d_\pi$-dimensional identity matrix, and $\pi(x^{-1})=\pi(x)^{-1}$ for all $x\in G$ .
The character $\chi^{\pi}$ of a representation $\pi$ of a finite group $G$ is defined by $\chi^{\pi}(x)=Tr(\pi(x))$ for all $x\in G$. The characters are complex-valued functions defined on $G$ which are constant on conjugacy classes. Indeed, let $x \in G$, let $z \in [x]$, and let $y\in G$ be such that $z=yxy^{-1}$. Then
\begin{equation}\label{eugenio:1}
 \chi^\pi(z) = Tr(\pi(yxy^{-1})) = Tr(\pi(y) \pi(x) \pi(y^{-1})) = Tr(\pi(x)) = \chi^\pi(x).
\end{equation}

Complex valued functions with this property are called class functions (see e.g. \cite[Definition on p. 258]{terras_1999}). Precisely, $f:G\to \CC$ is a class function if for all $x,y \in G$,

\begin{equation}\label{eugenio:2}
f(yxy^{-1})=f(x).
\end{equation}
A representation $\pi$ of $G$ is unitary if $\pi(x)$ is a unitary matrix for all $x\in G$.

A representation $\pi$ of $G$ is irreducible if for all $x\in G$, the linear operator $\pi(x): \mathcal{H}_\pi \to \mathcal{H}_\pi$ does not have invariant subspaces other than the trivial ones. Two representations $\pi_1$ and $\pi_2$ of $G$ are said to be equivalent if there exits an invertible square matrix $T$ such that $\pi_1(x)=T^{-1}\pi_2(x)T$ for all $x\in G$.

\begin{definition}
For a finite group $G$, the dual $\widehat{G}$ is the set of irreducible unitary representations of $G$ up to unitary equivalence.
\end{definition}

It is known (see e.g. \cite[Chapt. 15, Theorem 3]{terras_1999}) that the number of elements of $\widehat{G}$ coincides with the number of conjugacy classes of $G$. Let us denote by $k$ this number, let us choose a system of inequivalent unitary irreducible representation so that we can identify $\widehat{G}=\{\pi^1, \ldots, \pi^k\}$, and for each $i=1,2, \ldots, k$, write $\chi^i$ for the character of $\pi^i.$ Denote by $\Cl_1, \ldots, \Cl_k$ the conjugacy classes of $G$, where $\Cl_1=[\mathbf{1}]=\{\mathbf{1}\}$. By \eqref{eugenio:1}, each $\chi^i$ is constant in each class $\Cl_r$, $1\leq r \leq k$, and we denote by $\chi^i_r$ the value of $\chi^i$ in the conjugacy class $\Cl_r$. The square matrix of size $k\times k$ formed with the elements $\chi^i_r$ is called the character table of $G$. Observe that if $d_i$ is the degree of $\pi^i \in \widehat{G}$, then $d_i=\chi^i_{1}$. There exist algorithms to calculate the character table of finite groups (see \cite{grove_1997}(Chapter 7), \cite{Burnside1911}, \cite{dixon_1967}, \cite{dixon_1970} and \cite{schneider_1990}).

A relevant unitary representation of $G$ is the left regular representation $L$. It is defined on the Hilbert space $L^2(G) = \{f:G \to \CC\}$, which is considered to be endowed with the inner product
\begin{equation}\label{eq:scalarprod}
    \langle f, g \rangle = \frac{1}{|G|} \sum_{x\in G} f(x)\overline{g(x)}, \quad f,g\in L^2(G),
\end{equation}
and the induced norm $\|f\|=(\langle f, f \rangle)^{1/2}$, by
$$
L(h)f(x) = f(h^{-1}x) \, ,  \quad \textnormal{for } f \in L^2(G) \textnormal{ and } x,h \in G.
$$
Recall that $L$ is equivalent to the direct sum of irreducibles (see e.g. \cite[Chapt. 15, Lemma 2]{terras_1999})
\begin{equation}\label{eq:directsum}
L(x) \approx \bigoplus_{r = 1}^k d_r \pi^r(x) \quad \forall \, x \in G
\end{equation}
where $d_r \pi^r$ indicates the direct product of $d_r$ copies of the representation $\pi^r$.

The convolution of $f, g \in L^2(G)$ is defined as
\begin{equation}\label{def:convolution}
f*g(x) =\frac{1}{|G|}\sum_{y\in G} f(y) L(y) g(x) = \frac{1}{|G|}\sum_{y\in G} f(y)g(y^{-1}x) = \frac{1}{|G|} \sum_{z\in G} f(xz^{-1})g(z), \quad f,g\in L^2(G).
\end{equation}

The convolution is in general noncommutative. But if $f\in L^2(G)$ is a class function (see \eqref{eugenio:2}) then, for all $g \in L^2(G)$ and all $x \in G$
\begin{equation}\label{eugenio:5}
    f*g(x) = \frac{1}{|G|} \sum_{z\in G} f(xz^{-1})g(z) = \frac{1}{|G|}\sum_{z\in G} g(z)f(x(z^{-1}x)x^{-1}) = \frac{1}{|G|}\sum_{z\in G} g(z)f(z^{-1}x) = g*f(x).
\end{equation}

The Fourier transform of $f\in L^2(G)$ is defined as the matrix valued function (see Chapters 3 and 5 of \cite{folland_1995} and Chapter 15 of \cite{terras_1999})
\[
\widehat{f}(\pi^r)=\frac{1}{|G|}\sum_{x\in G} f(x) \pi^r(x^{-1}), \quad \pi^r\in\widehat{G}.
\]
It factorizes convolutions in the following way: for $f, g \in L^2(G)$, it is easy to see that
\begin{equation}\label{eq:fourierfactor}
\widehat{f\ast g}(\pi^r) = \widehat{g}(\pi^r) \widehat{f}(\pi^r).
\end{equation}
Moreover, for finite groups one can write Plancherel's theorem as (see \cite[Chapt. 15, Theorem 2]{terras_1999}, but observe that here we are considering the normalized scalar product)
\begin{equation}\label{eq:Plancherel}
\|f\|^2_{L^2(G)} = \sum_{r = 1}^k\sum_{i,j=1}^{d_r} d_r \left|\left\langle f, \pi^r_{ij}\right\rangle\right|^{2} = \sum_{r = 1}^k\sum_{i,j=1}^{d_r} d_r \left|\widehat{f}(\pi^r)_{ij}\right|^{2}
\end{equation}
for any $f \in L^2(G)$, where $\pi^r_{ij} : G \to \CC$ is the $ij$-th element of the representation $\pi^r$.

\subsection{G-Wavelet transform} \label{G-wavelet transform}

The definition of the $G$-wavelets is inspired by the idea of \cite{hammond} (see Section \ref{conection}).
\begin{definition} \label{Def2-2}
Given a measure space $(\Omega,\mu)$, we call \emph{kernel} a family of functions $\gamma = \{\gamma_\omega: \{1,\ldots,k\} \to \CC\}_{\omega \in \Omega}$. The \textit{$G$-wavelet} associated to the kernel $\gamma_\omega$ is
\begin{equation}\label{def:2.1}
\psi_{\gamma_\omega}(x):=\sum_{r=1}^k d_r \gamma_\omega(r)\chi^r(x), \quad x\in G.
\end{equation} 
\end{definition}

Assume that for $\omega\in \Omega$, we have $\displaystyle B_\omega := \max_{1\leq r \leq k} |\gamma_\omega(r)| < \infty$. Then $\psi_{\gamma_\omega}
\in L^\infty(G)$. Indeed, since the trace of a unitary matrix is less than or equal to the dimension of the matrix,
\begin{equation} \label{eq:infinity}
	\|\psi_{\gamma_\omega}\|_\infty = \max_{x\in G} |\psi_{\gamma_\omega}(x)| \leq \sup_{x\in G} \sum_{r=1}^k d_r |\gamma_\omega(r)| |\chi^r(x)| \leq B_\omega \sum_{r=1}^k d_r^2 = B_\omega |G| < \infty\,, 
\end{equation}
where the have used that $\displaystyle \sum_{r=1}^k d_r^2 = |G|$ (see \cite[Chapt. 15, Lemma 2]{terras_1999}).

The $G$-wavelets given in \eqref{def:2.1} are class functions. This can be seen directly, using the property of the trace:
\begin{align*}
\psi_{\gamma_\omega}(h^{-1} x h) & = \sum_{r=1}^k d_r \gamma_\omega(r)\chi^r(h^{-1}x h) = \sum_{r=1}^k d_r \gamma_\omega(r) Tr(\pi^r(h^{-1})\pi^r(x)\pi^r(h))\\
& = \sum_{r=1}^k d_r \gamma_\omega(r) Tr(\pi^r(x)) = \sum_{r=1}^k d_r \gamma_\omega(r)\chi^r(x).
\end{align*}
We also have that the Fourier transform of a $G$-wavelet is a multiple, given by the defining kernel, of the identity operator on $L^2(G)$:
\begin{equation}\label{eq:FourierGwavelet}
\widehat{\psi_{\gamma_\omega}}(\pi^r) = \gamma_\omega(r) I_{d_r}.
\end{equation}
This can be obtained by direct computation of the matrix elements starting from \eqref{def:2.1}:
\begin{align*}
\widehat{\psi_{\gamma_\omega}}(\pi^r)_{i,j} & = \frac{1}{|G|}\sum_{x\in G}  \sum_{s=1}^k d_s \gamma_\omega(s)\chi^s(x) \pi^r(x^{-1})_{i,j}
= \frac{1}{|G|}  \sum_{s=1}^k d_s \gamma_\omega(s) \sum_{x\in G} Tr(\pi^s(x)) \pi^r(x^{-1})_{i,j}\\
& = \frac{1}{|G|}  \sum_{s=1}^k d_s \gamma_\omega(s) \sum_{l = 1}^{d_s} \sum_{x\in G} \pi^s(x)_{l,l} \pi^r(x^{-1})_{i,j} = \gamma_\omega(r) \delta_{i,j}
\end{align*}
where the last identity is Schur's orthogonality theorem (see e.g. \cite[Chapt. 15, Theorem 1]{terras_1999}).

We define the $G$-wavelet transform in terms of the left translates of a $G$-wavelet, as follows.
\begin{definition} \label{Wavelet-transform}
Given a kernel $\gamma$, we define the \textit{$G$-wavelet  transform} of $f\in L^2(G)$ by 
	\[
	W_{\gamma}f(\omega,h) :=  f\ast \psi_{\gamma_\omega}(h) = \frac{1}{|G|} \sum_{x \in G} f(x) \psi_{\gamma_\omega}(x^{-1}h), \quad \omega \in \Omega, h\in G.
	\]
\end{definition}

Since the wavelets are class functions, by \eqref{eugenio:5} we can write \begin{equation}\label{eq:Wconv}
		W_{\gamma}f(\omega,h) = f \ast \psi_{\gamma_\omega} (h) = \psi_{\gamma_\omega}  \ast f (h).
	\end{equation}

Assume that for $\omega\in \Omega$, we have $\displaystyle B_\omega := \max_{1\leq r \leq k} |\gamma_\omega(r)| < \infty$. By \eqref{eq:infinity} and the fact that $G$ is finite,
$\psi_{\gamma_\omega}\in L^\infty(G) \subset \ L^1(G).$ Hence, by Young's inequality and \eqref{eq:Wconv} we conclude that $W_{\gamma}f(\omega,\cdot)\in L^2(G).$
	
Moreover, using \eqref{eq:fourierfactor}, we have that 
	\[
	\widehat{W_{\gamma}f}(\omega,\cdot)(\pi^r) = \widehat{f}(\pi^r) \widehat{\psi_{\gamma_\omega}}(\pi^r).
	\]
	Using \eqref{eq:FourierGwavelet}, we immediately obtain the following result.
\begin{lemma}\label{lem:FourierW}
	Given a kernel $\gamma$ as in Definition \ref{Def2-2}, the \textit{$G$-wavelet transform} of $f\in L^2(G)$ satisfies
	\[
	\widehat{W_{\gamma}f}(\omega,\cdot)(\pi^r) = \gamma_\omega(r) \, \widehat{f}(\pi^r).
	\]
\end{lemma}


\subsection{Properties} \label{Properties}
Let us first show that, if a Calder\'on type condition holds, we have a reconstruction formula for the introduced $G$-wavelet transform, analogous to that for the continuous wavelet transform.

In the sequel, for $f\in L^2(G)$ we write $f^\dagger(x) = \overline{f(x^{-1})}$ for the natural involution in $L^2(G)$. As in \eqref{eq:FourierGwavelet}, it can be proved that
	\begin{equation}\label{eq:FourierGwavelet-dagger}
		\widehat{\psi_{\gamma_\omega}^\dagger}(\pi^r) = \overline{\gamma_\omega(r)} I_{d_r}.
	\end{equation}

\begin{theorem}[Inverse formula]
If the kernel $\gamma = \{\gamma_\omega:\{1,\ldots, k\} \to \CC\}_{\omega\in \Omega}$ satisfies the condition
\begin{equation}\label{calderon}
\int_\Omega |\gamma_\omega(r)|^2 d\mu(\omega) = 1 \quad \forall \ r\in\{1,\ldots, k\},
\end{equation}
then for all $f\in L^2(G)$
\begin{equation}
f(x) = \frac{1}{|G|}\sum_{h\in G}\int_\Omega W_{\gamma}f(\omega,h)\, L(h)\psi_{\gamma_\omega}^\dagger(x) \, d\mu(\omega),
\end{equation}
for every $x\in G$.
\end{theorem}

\begin{proof}
By Definition \ref{Wavelet-transform} we can write
\begin{align*}
	\frac{1}{|G|}\sum_{h\in G}\int_\Omega W_{\gamma_\omega} f(h) \, L(h)\psi_{\gamma_\omega}^\dagger(x) \, d\mu(\omega) & = \frac{1}{|G|}\sum_{h\in G}\int_\Omega f\ast\psi_{\gamma_\omega}(h) \psi_{\gamma_\omega}^\dag (h^{-1}x)d\mu(\omega)\\
	& =\int_\Omega f\ast\psi_{\gamma_\omega}\ast\psi_{\gamma_\omega}^\dag(x)d\mu(\omega).
\end{align*}

By \eqref{eq:fourierfactor} and \eqref{eq:FourierGwavelet-dagger} we have that $\widehat{\psi_{\gamma_\omega}\ast\psi_{\gamma_\omega}^\dag}(\pi^r) = |\gamma_\omega(r)|^2 I_{d_r}$, so that, by the same argument leading from \eqref{def:2.1} to \eqref{eq:FourierGwavelet}, the second convolution reads
\[
\psi_{\gamma_\omega}\ast\psi_{\gamma_\omega}^\dag(x) = \sum_{r=1}^kd_r |\gamma_\omega(r)|^2 \chi^r(x) .
\]
Thus, using Calder\'on's condition \eqref{calderon}, and recalling \eqref{eq:directsum}, we get
\begin{align*}
\int_\Omega f\ast\psi_{\gamma_\omega}\ast\psi_{\gamma_\omega}^\dag(x)d\mu(\omega) & = \sum_{r=1}^k d_r \int_\Omega |\gamma_\omega(r)|^2d\mu(\omega) f*  \chi^r(x) = f \ast \left(\sum_{r=1}^k d_r \chi^r \right)(x)\\
& = f \ast Tr\left(\sum_{r=1}^k d_r \pi^r \right)(x) = f \ast Tr (L) (x) = f(x)
\end{align*}
where the last identity is due to the fact that the trace of the left regular representation is $|G|$ when computed in the identity, and zero otherwise.
\end{proof}

For any practical computation, $\Omega$ must be a finite number of scales $\Omega = \{0,\dots,J\}$. The natural question is how appropriated the $G$-wavelet set $\{\psi_{\gamma_j}| 0 \leq j \leq J\}$ will be for representing functions on the group. This question can be addressed by characterizing when the system of translates is a frame of $L^2(G)$, which is the content of the next theorem.
%
\begin{theorem}\label{teo:2.3}
Let $\{\gamma_j:\{1,\ldots, k\} \to \CC\}_{0 \leq j \leq J}$ and denote by
\[
\mathscr{C}(r) = \sum_{j=0}^{J}  |\gamma_j(r)|^2 \, , \ r \in \{1,\dots,k\}.
\]
The following are equivalent
\begin{itemize}
\item[i)] The set $\{L(h)\psi_{\gamma_j} \,|\, 0\leq j \leq J, h\in G\}$ is a frame of $L^2(G)$ with constants $0<A\leq B < \infty$, i.e.
\begin{equation}\label{eq:14}
A\|f\|^2\leq \frac{1}{|G|}\sum_{h\in G} \sum_{j=0}^{J} |\langle f, L(h)\psi_{\gamma_j} \rangle|^2\leq B\|f\|^2 \quad \forall \  f\in L^2(G).
\end{equation}
\item[ii)] There exist two constants $0 < A \leq B < \infty$ such that
\begin{equation}\label{eq:Calderon}
A \leq \mathscr{C}(r) \leq B \quad \forall \ r  \in \{1,\dots,k\}.
\end{equation}
\end{itemize}
\end{theorem}
\begin{proof}
For all $h\in G$ and all $j\in\{0,1, \dots, J\}$ we have
\[
\langle f, L(h)\psi_{\gamma_j}\rangle = \frac{1}{|G|}\sum_{x\in G}  f(x) \overline{\psi_{\gamma_j}(h^{-1}x)} = \frac{1}{|G|}\sum_{x\in G}  f(x) \psi_{\gamma_j}^\dag(x^{-1}h) = f\ast \psi_{\gamma_j}^\dag(h)\,.
\]
By \eqref{eq:fourierfactor} and \eqref{eq:FourierGwavelet-dagger}, for any $\pi^r\in \widehat G$,
\[
\widehat {f\ast \psi_{\gamma_j}^\dag}(\pi^r) = \overline{\gamma_j(r)} \widehat f(\pi^r)\,.
\]

By Plancherel's identity \eqref{eq:Plancherel}, for all $j \in \{0,\dots,J\}$ we have
\[
\frac{1}{|G|}\sum_{h\in G} |\langle f, L(h)\psi_{\gamma_j}\rangle|^2 = \|f\ast \psi_{\gamma_j}^\dag\|_{L^2(G)}^2 = \sum_{r=1}^k\sum_{m,n=1}^{d_r}d_r |\gamma_j(r)|^2 |\widehat{f}(\pi^r)_{nm}|^2.
\]
Then
\begin{equation}\label{eq:CalderonFourier}
\frac{1}{|G|}\sum_{h\in G} \sum_{j=0}^{J} |\langle f, L(h)\psi_{\gamma_j}\rangle|^2
= \sum_{r=1}^k \left(  \sum_{j=0}^{J}  |\gamma_j(r)|^2\right)d_r \sum_{m,n=1}^{d_r} |\widehat{f}(\pi^r)_{nm}|^2 .
\end{equation}
We can now complete the proof by recalling that  Plancherel's identity \eqref{eq:Plancherel} reads
\begin{equation*}
\sum_{r=1}^k\sum_{m,n=1}^{d_r}d_r|(\widehat{f}(\pi^r))_{nm}|^2=\|f\|^2.
\end{equation*}
Indeed, if \eqref{eq:Calderon} holds, then one immediately obtains the frame condition \eqref{eq:14}. On the other hand, suppose that \eqref{eq:14} holds and suppose, by contradiction, that \eqref{eq:Calderon} is not true because $\mathscr{C}(r_0) < A$ for one $r_0 \in \{1, \dots, k\}$. Then, if we choose $f_0 \in L^2(G)$ to be such that $\widehat{f_0}(\pi^r) = \delta_{r,r_0}M$ for a nonzero $d_{r_0} \times d_{r_0}$ matrix $M$, we get that \eqref{eq:CalderonFourier} reads
\[
\frac{1}{|G|}\sum_{h\in G} |\langle f, L(h)\psi_{\gamma_j}\rangle|^2 = d_{r_0} \mathscr{C}(r_0) \sum_{m,n=1}^{d_r} |\widehat{f}(\pi^r)_{nm}|^2 < A d_{r_0} \sum_{m,n=1}^{d_r} |\widehat{f}(\pi^r)_{nm}|^2 = A \|f\|^2
\]
which contradicts \eqref{eq:14}. The other inequality can be treated similarly.
\end{proof}

\begin{remark}\label{remark2.7}
	The same result as in Theorem \ref{teo:2.3} holds if we consider the set $\{L(h)\psi_{\gamma_j}^\dag \,|\, 0\leq j \leq J, h\in G\}.$ In fact, 
	$
	\langle f, L(h)\psi_{\gamma_j}^\dag\rangle  = f\ast \psi_{\gamma_j}(h)\,.
	$
	and by \eqref{eq:fourierfactor} and \eqref{eq:FourierGwavelet}, for any $\pi^r\in \widehat G$,
	$
	\widehat {f\ast \psi_{\gamma_j}}(\pi^r) = \gamma_j(r) \widehat f(\pi^r)\,
	$
	so that the above proof goes unchanged to obtain
	$$
	A\|f\|^2\leq \frac{1}{|G|}\sum_{h\in G} \sum_{j=0}^{J} |\langle f, L(h)\psi_{\gamma_j}^\dag \rangle|^2 = \sum_{j=0}^J \|\psi_{\gamma_j}\ast f\|^2 \leq B\|f\|^2 \quad \forall \  f\in L^2(G).
	$$
\end{remark}

\subsection{Connection with Hammond et al. Wavelets}\label{conection}

In \cite{hammond} D.K. Hammond, P. Vandergheynst, and R. Givonval define and study wavelets on weighted connected finite graphs. Let $V$ be the set of vertices of the graph, denoted by $V=\{1,2, \dots N\}$, $E$  the set of edges of the graph and $w:E \to \mathbb R^+$ the weights. 

The adjacency matrix of the weighted graph is the $N\times N$ matrix $A=(a_{m,n})$ where 
\begin{equation*}
	a_{m,n}=\begin{cases}
		w(e), \text{ if } e\in E \ \text{connect vertices} \  m \  \text{and}\  n\\
		0, \text{ otherwise}.
	\end{cases}
\end{equation*}
For each vertex $m$ in $V$, denote by $d(m)$ the sum of the weights of all edges incident to $m$. Let $D$ be the matrix with diagonal entries $d(1), d(2), \dots, d(m)$, and 0 otherwise. The Laplacian of the weighted graph $\{V, E, w\}$ is the $N\times N$ matrix
\[
\mathcal L = D - A.
\]
The matrix $\mathcal L$ is real and symmetric. Therefore, it has a complete set of orthonormal eigenvectors, denoted by $\chi_\ell, \ell = 1, 2, \dots. N$, with associated eigenvalues $\lambda_l$, that is $\mathcal L \chi_\ell = \lambda_l \chi_\ell$. Moreover, $\lambda_1=0$ is an eigenvalue of multiplicity one and the remainder eigenvalues are all positive. (See \cite{hammond} for details.)

For a kenel $g: \mathbb R^+ \to \mathbb R^+$ the \textit{spectral graph wavelets} with parameter $t\in \mathbb R^+$ and $n=1, 2, \dots, N$ are given in \cite{hammond} by
\begin{equation} \label{Eq:SGW}
	\psi_{t,n} (m) = \sum_{\ell=1}^N g(t\lambda_\ell) \chi_{\ell}(n) \chi_{\ell} (m)
\end{equation}

The purpose of this section is to show that if $G$ is a finite abelian group, the $G$-wavelets defined in \eqref{def:2.1} coincide with the spectral graph wavelets defined in \eqref{Eq:SGW} for appropriate kernels $\gamma_\omega$ and $g$. Observe that our results in Sections \eqref{G-wavelet transform} and \eqref{Properties} are valid for finite groups not necessarily abelian.

The connection between our wavelets and the spectral graph wavelets of  \cite{hammond} is provided by the Cayley graph of a group.

\begin{definition}
	Given a group $G$ and a symmetric set of generators $S$ of $G$ (meaning $S$ generates $G$ and $S$ is closed under taking inverses) , the Cayley graph $X =Cay(G, S)$ of $G$ is a graph whose vertices are the elements of $G$,  two elements $x$ and $y$ in $G$,
	are connected if there exists an element $s\in S$ such that
	$x = sy$ and all the weights of the edges are 1.
\end{definition}

For $G$ abelian group with $N$ elements, all unitary irreducible representations are unidimensional and they are equal to the characters of the group. We write
\[
\widehat G =\{ \chi^1, \chi^2, \dots, \chi^N \,. \}
\]

There is a connection between the characters of the abelian group and the eigenvalues and eigenvectors  of the Laplacian of the Cayley graph of the group, as the following theorem (due to Lovász \cite{Cayley}), shows.
\begin{theorem} Let $G$ be an abelian group of order $N$ with
	 characters $\chi^1, \chi^2, \ldots, \chi^N$. Let $S\subset G$ be a symmetric set of generators of $G$. Then the eigenvalues of the Laplacian of the  Cayley graph $X=Cay(G,S)$ of $G$ are given by
	\begin{equation}
		\lambda_i=|S|-\sum_{x\in S}\chi^i(x)
	\end{equation}
	Moreover, an eigenvector corresponding to $\lambda_i$ is $(\chi^i(x) : x \in G)^t$ (that is, the column vector whose $x$-entry is $\chi^i(x)$).
\end{theorem}

Given a kernel $g: \mathbb R^+ \to \mathbb R^+$, let $\gamma_t : \{1,2, \dots, N\} \to \mathbb R^+$ be given by $\gamma_t (\ell) = g(t\lambda_l)$ for $t\in \mathbb R^+.$ Then, for $x\in G$, equation \eqref{def:2.1} reads:
\begin{equation} \label{eq:connection}
	\psi_{\gamma_t}(x)=\sum_{\ell=1}^N g(t\lambda_\ell)\chi^l(x),
\end{equation}
which coincides with \eqref{Eq:SGW} when $n$ is the identity of the group. For any other $h\in G$, the wavelets
\[
\psi_{\gamma_t, h}(x) = \sum_{\ell=1}^N g(t\lambda_\ell) \chi^{\ell}(h) \chi^{\ell} (x)= \sum_{\ell=1}^N g(t\lambda_\ell) \chi^{\ell}(hx) = \psi_{\gamma_t}(hx)
\]
are all translations of the wavelet given by \eqref{eq:connection}.

\begin{example}
	Let $p$  be a prime number and let $G=(\ZZ_p^*, *)$ be the multiplicative group of integers mod ($p$) minus $0$. Let $x$ be a generator of $(\ZZ_p^*, *)$  and $\omega = e^{2\pi i /(p-1)}$. The character table of $G$ is
	$$
	\begin{array}{r|rrrrr} 
		 & 1 & x & x^2 & \cdots & x^{p-2} \\
		\hline \chi^0 & 1 & 1 & 1 &\cdots & 1 \\
		\hline \chi^1 & 1 & \omega & \omega^2 & \cdots & \omega^{p-2}\\
		\hline \chi^2 & 1 & \omega^2 & \omega^4 & \cdots & \omega^{p-3} \\
		\hline \vdots & \vdots & \vdots & \vdots & \cdots & \vdots\\
		\hline \chi^{p-2} & 1 & \omega^{p-2} & \omega^{p-3} & \cdots & \omega
	\end{array}
	$$
    The Cayley graph of $G$ with $S=\{x, x^{-1}\}$ is $C_{p-1}$, a cycle graph with $p-1$ vertexes. 
    \begin{figure}[H]
        \centering
        \includegraphics[width=0.3\linewidth]{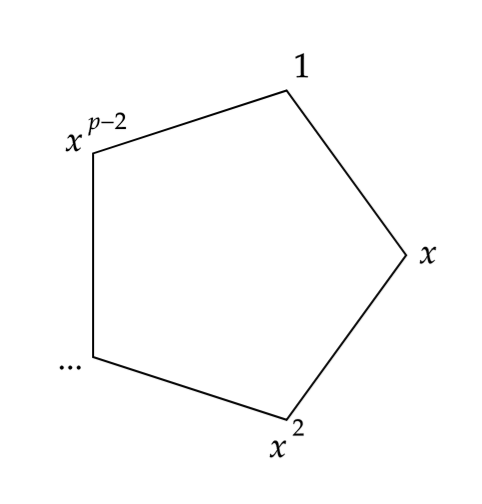} \
        \caption{Representation of the Cayley graph of $(\ZZ_p^*, *)$}
        \label{fig:cayCp}
    \end{figure}
    The adjacency matrix of $C_{p-1}$ is
    \begin{equation*}
        \begin{pmatrix}
            0 & 1 & 0 & \ldots & 0 & 1\\
            1 & 0 & 1 & \ldots & 0 & 0\\
            0 & 1 & 0 & \ldots & 0 & 0\\
            \vdots&\vdots&\vdots& \ddots & \vdots&\vdots\\
            0 & 0 & 0 & \ldots & 0 & 1\\
            1 & 0 & 0 & \ldots & 1 & 0
            \end{pmatrix}
    \end{equation*}
    It is a circulant matrix and their eigenvectors are $(\chi^i(x) : x \in \ZZ_p^*)$. The wavelets associated to a kernel $\gamma_z: \{0,1, p-2\} \to \mathbb C, z\in \Omega$  are: 
\begin{equation}
    	\psi_{\gamma_z}(x^m)=\sum_{n=0}^{p-2} \gamma_z (n)\omega^{nm}.
    \end{equation}
\end{example}

\section{Construction of Scattering Networks on Finite Groups}\label{section a}

Our construction will rely on the $G$-wavelets defined in the previous section. Consider as measure space the finite set $\Omega = \{0,\dots,J\}$ and, for a finite group $G$ whose dual has size $k$, consider a \textit{kernel} $\{\gamma_j: \{1,\ldots, k\} \to \CC\}_{0\leq j\leq J}$. The $k(J+1)$ values of this kernel are the only parameters of the network to be defined. Let us also denote by $\{\psi_j = \psi_{\gamma_j}\}_{0\leq j\leq J}$ the associated family of $G$-wavelets defined in \eqref{def:2.1}. For more clarity in the presentation of the scattering network construction, we will denote by $\phi = \psi_0$, and set the kernel in such a way that
\begin{equation}\label{multir}
\sum_{j = 1}^J |\gamma_j(r)|^2 = 1 - |\gamma_0(r)|^2 \,, \quad r=1,2, \dots, k,
\end{equation}
hence constraining the $G$-wavelet to be a Parseval frame  (see Theorem \ref{teo:2.3}).

\

For a fixed $j\in \{1,..,J\}$, we call \textit{one-step $G$-propagator} $U[j]:L^2(G) \to L^2(G)$ the operator defined by
\begin{equation}
U[j]f:=|\psi_j*f|, \quad f \in L^2(G).
\end{equation}

For $m, J > 0$, we denote by $\Lambda_J^m=\{1, \ldots, J\}^m$ and call it the set of $J$-paths of length $m$. For $m > 0$ and $p = (j_1, \dots, j_m) \in \Lambda_J^m$, the \textit{$m$-steps $G$-scattering propagator} $U[p]:L^2(G) \to L^2(G)$ is the operator defined by
\begin{equation}
    U[p]f:=U[j_m]\ldots U[j_1]f, \quad f \in L^2(G).
\end{equation}
Observe that $U[p]$ takes $L^2(G)$ into $L^2(G)$ due to the fact that $\psi_j\in L^\infty(G) \subset L^1(G), j=1,2, \dots, J$ (see \eqref{eq:infinity}).

The collection of all $J$-paths of finite length will be denoted by $P_J:=\bigcup_{m=0}^\infty \Lambda_J^m$, with the convention that $\Lambda_J^0 = \emptyset$. With a slight abuse of notation, we will also indicate, for the empty path, $U[\emptyset]f=f$.

For a path $p=(j_1, \ldots, j_m) \in \Lambda_J^m$ and a $j_{m+1}\in\{1,\ldots,J\} = \Lambda_J^1$, following \cite{Mallat11} we define the path $p +j_{m+1}=(j_1, \ldots,j_m, j_{m+1}) \in \Lambda_J^{m+1}$. Then
\begin{equation}
    U[p + j_{m+1}] := U[j_{m+1}]U[p].    
\end{equation}
We will use the function $\phi$ introduced above to define the \textit{$G$-average operator} $Q:L^2(G)\to L^2(G)$ as
\begin{equation}
Q f= \phi*f \, , \quad f\in L^2(G).
\end{equation}
In the nomenclature introduced in \cite{Mallat11} we will then call \textit{$G$-windowed scattering operator} for a path $p\in P_J$ the map $S[p]:L^2(G)\to L^2(G)$ defined by
\begin{equation}
S[p]f:= Q U[p] f, \quad f\in L^2(G).
\end{equation}
Observe that $S[p]$ takes $L^2(G)$ into $L^2(G)$ due to the fact that $\phi\in L^\infty(G) \subset L^1(G) $ (see \eqref{eq:infinity}).
\begin{definition}
The \textit{$G$-scattering transform} with $J$ generators is defined as
\[
S[P_J]f = \{S[p]f\}_{p\in P_J}, \quad f \in L^2(G).
\]
We will denote by $\|\cdot\|_{2,J}$ its $\ell_2(P_J,L^2(G))$ norm, given by
\begin{align*}
\|S[P_J]f\|_{2,J} =\left(\sum_{p\in P_J} \|S[p]f\|^2\right)^{1/2} =\left(\sum_{m=0}^\infty \sum_{(j_1,\ldots, j_m) \in \Lambda_J^m} \|Q U[j_m]\ldots U[j_1]f\|^2\right)^{1/2}.
\end{align*}
\end{definition}
The $G$-scattering transform acts as a group convolutional neural network. At the $m$-th layer, the \emph{propagated signal} is $\{U[p]f:p \in\Lambda_J^m\}$ and the \emph{extracted features} are $\{S[p]f:p \in\Lambda_J^{m-1}\}$.
\begin{figure}[ht] 
		\centering
		\includegraphics[width=\textwidth]{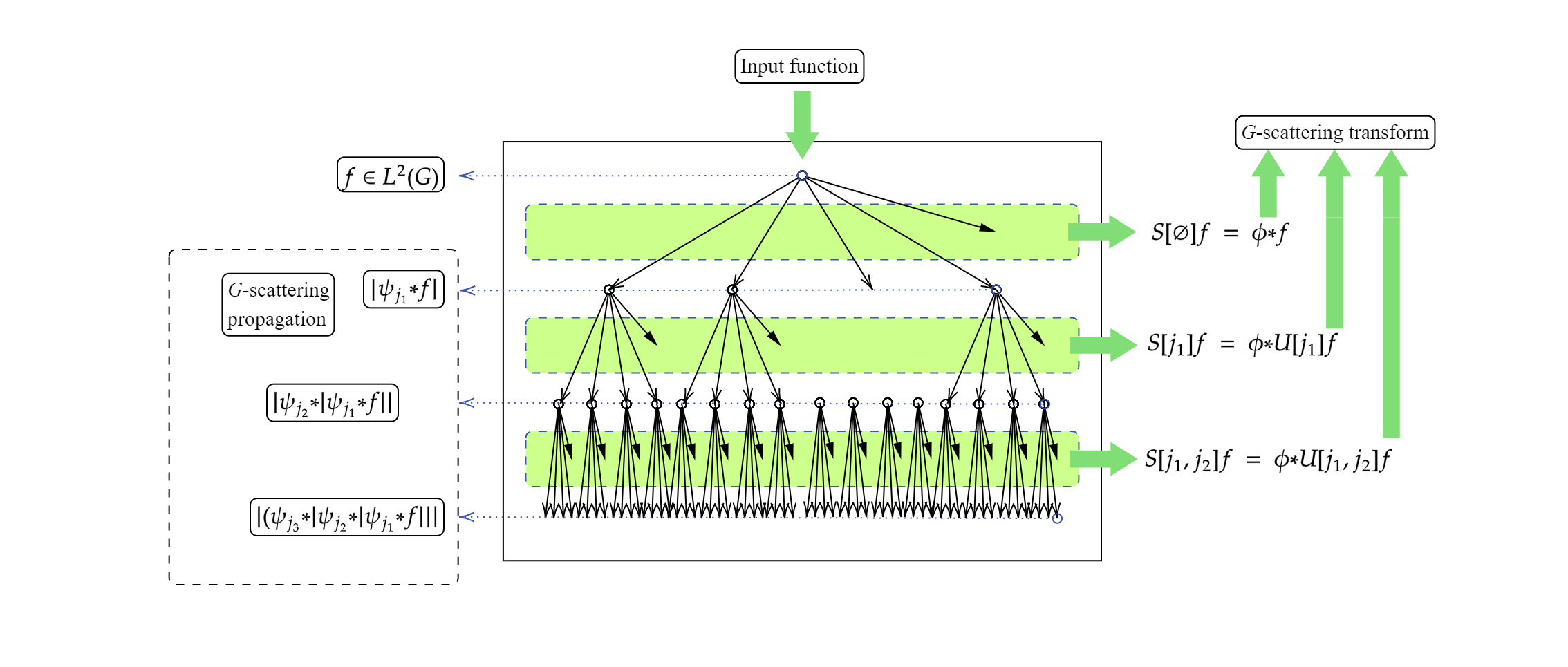}
		\caption{G-Scattering transform}
		\label{fig:example}
	\end{figure}

\begin{remark}
The scattering networks constructed in this section follows the original idea of S. Mallat in \cite{Mallat11} (see, also, Figure 1 in the Introduction). For this scattering transform to have good properties, the wavelets used must satisfy the equations that appear in Proposition 2.7 of \cite{Mallat11}.

The same idea is used by D. Zou and G. Lerman in \cite{zou} to construct scattering networks in weighted connected finite graphs. To prove good properties for their scattering transform, they use a scaling function $\phi$ and a wavelet $\psi$ that satisfy
	$$
	|\widehat{\phi}(2^J \omega)|^2 + \sum_{j>-J} |\widehat{\psi} (2^{-j}\omega)|^2 = 1\,, \quad \omega\in \mathbb R\,,
	$$
that can be achieved with the theory of Multiresolution Analysis (MRA).

While our $G$-scattering transform is also a cascade of modulus of convolutions, our wavelets are different from the ones in \cite{Mallat11} and \cite{zou}. In these works the scattering network depends only on the chosen wavelet and at each level $m\geq 1$ there are infinitely many paths selected from $\{(j_1, \dots, j_m): j_i \geq -J\}$. In our case, described above, our wavelets in \eqref{def:2.1} depend on $(J+1)k$ parameters $\{\gamma_j(r): 1\leq r\leq k\}_{0\leq j \leq J}$, where $k$ is the number of conjugacy classes of $G$, with the restriction \eqref{multir}. Therefore, G-wavelets are easy to construct and at each level $m\geq 1$ the number of features extracted is finite.
\end{remark}

Now we introduce a structural property of the scattering transform that will be needed in Chapter \ref{section b}, which states that, in the scattering networks, the propagated norm at the $m$-th layer splits into the propagated norm at the next layer plus the output energy at the current layer.

\begin{lemma}\label{lem:3.2.2}  Let $\{\gamma_j: \{1,\ldots, k\} \to  \CC\}_{0\leq j\leq J}$ be a kernel satisfying \eqref{multir}, and let $U[\cdot]$ and $S[\cdot]$ be the associated $G$-scattering propagator and the $G$-windowed scattering operator. Then for all $m \geq 0$
	\begin{equation}\label{eq:36}
		\sum_{p\in \Lambda_J^m}\|U[p]f\|^2 = \sum_{p\in \Lambda_J^{m+1}}\|U[p]f\|^2+\sum_{p\in \Lambda_J^{m}}\|S[p]f\|^2.
	\end{equation}
\end{lemma}
\begin{proof}
The case $m=0$ is a consequence of condition \eqref{multir} and  Remark \ref{remark2.7} with $A = B = 1$. Indeed,
\[
\|f\|^2 = \sum_{j=0}^{J}\|\psi_j *f\|^2 = \|\phi \ast f\|^2 + \sum_{j=1}^{J}\|\psi_j *f\|^2 = \|Q f\|^2 + \sum_{j=1}^{J}\|U[j]f\|^2.
\]
By repeating this argument replacing $f$ with $U[p]f$ we can obtain, for  any $p\in \Lambda_J^m$
\[
\|U[p]f\|^2 = \sum_{j=1}^{J}\|U[p+j]f\|^2 +\|S[p]f\|^2.
\]
Finally, we can conclude by summing the previous equation for $p\in \Lambda_J^m$:
\[
\sum_{p\in \Lambda_J^m}\|U[p]f\|^2 = \sum_{p\in \Lambda_J^m}\sum_{j=1}^{J}\|U[p+j]f\|^2 + \sum_{p\in \Lambda_J^m}\|S[p]f\|^2 = \sum_{p\in \Lambda_J^{m+1}}\|U[p]f\|^2 + \sum_{p\in \Lambda_J^m}\|S[p]f\|^2
\]
where the last identity is due to
\[
\Lambda_J^{m+1}=\bigcup_{p\in \Lambda_J^m}\bigcup_{j=1}^{J} p+j. \qedhere
\]
\end{proof}
\section{Properties of the $G$-scattering transform} \label{section b}
The number of paths over which scattering can be computed is theoretically infinite; however, under condition \eqref{multir}, the energy of the $G$-scattering transform is not expanding, as it is made precise in the following Lemma. This result is a consequence of the structural property of the scattering transform stated in Lemma \ref{lem:3.2.2}\,.
\begin{lemma} \label{lemma4-1-New} ({\bf Non expanding})
	Let $\gamma = \{ \gamma_j: \{1,2, \dots, k\} \to \mathbb C\}_{j=0,1,\dots, J}$ be a kernel satisfying \eqref{multir}. Then, for $f\in L^2(G)$, 
	\begin{equation*}
			\|S[P_J]f\|_{2,J} \leq \|f\|\,.
	\end{equation*}
\end{lemma}
\begin{proof}
   From Lemma \ref{lem:3.2.2}, for all $M\in \mathbb N$
   \begin{align*}
   	\sum_{m=0}^M \sum_{p\in \Lambda_J^m}\|S[p]f\|^2 & = \sum_{m=0}^M \sum_{p\in \Lambda_J^m}\|U[p]f\|^2 - \sum_{m=0}^M \sum_{p\in \Lambda_J^{m+1}}\|U[p]f\|^2 \\
   	& = \|f\|^2 -  \sum_{p\in \Lambda_J^{M+1}}\|U[p]f\|^2 \leq \|f\|^2\,.
   \end{align*}
Taking the limit when $M$ goes to infinity we obtain the result.
\end{proof}
 
 \subsection{Stability}

We now prove that, under condition \eqref{multir}, if two sets of data $f, f' \in L^2(G)$ are closed in the $L^2(G)$ norm, then the $G$-scattering transforms of $f$ and $f'$ are also close in the $\ell_2 (P_J,L^2(G))$ norm. Observe that if the $G$-scattering transform were a linear operator (which is not), this will easily follow form Lemma \ref{lemma4-1-New}.
\begin{proposition} ({\bf Stability}) \label{prop Stability}
Let $\gamma = \{ \gamma_j: \{1,2, \dots, k\} \to \mathbb C\}_{j=0,1,\dots, J}$ be a kernel satisfying \eqref{multir}. Then, the scattering transform is stable in the following sense: if $f, f' \in L^2(G)$, 
\begin{equation}
    \|S[P_J]f-S[P_J]f'\|_{2,J}\leq \|f-f'\|.
\end{equation}
\end{proposition}
\begin{proof}
Let $p\in \Lambda_J^m, m \geq 0.$ Apply 
condition \eqref{multir} and  Remark \ref{remark2.7} with $A = B = 1$ to the function $U[p]f - U[p]f' \in L^2(G)$ to obtain
\begin{equation} \label{eq:new36}
    \|U[p]f-U[p]f'\|^2=\sum_{j=1}^{J}\|\psi_j*(U[p]f-U[p]f')\|^2 + \|\phi*(U[p]f-U[p]f')\|^2.
\end{equation}
By the triangle inequality, for $j=1,2, \dots, J,$
$$
|\psi_j*(U[p]f-U[p]f')| \geq |\psi_j*U[p]f|- |\psi_j*U[p]f'|\,.
$$   
Summing the equality \eqref{eq:new36} over $p\in \Lambda_J^m$ we obtain 
\begin{align*}
    &\sum_{p\in \Lambda_J^m}\|U[p]f-U[p]f'\|^2\\
    &=\sum_{p\in \Lambda_J^m}\left(\sum_{j=1}^J\|\psi_j * U[p]f-\psi_j*U[p]f'\|^2 + \|\phi*U[p]f-\phi*U[p]f'\|^2\right)\\
    &\geq \sum_{p\in \Lambda_J^m}\left(\sum_{j=1}^{J}\| |\psi_j*U[p]f|-|\psi_j*U[p]f'|\|^2 + \|\phi*U[p]f-\phi*U[p]f'\|^2\right)\\
    &= \sum_{p\in \Lambda_J^m}\sum_{j=1}^J \|U[p+j]f-U[p+j]f'\|^2+ \sum_{p\in \Lambda_J^m}\|S[p]f-S[p]f'\|^2\\
    &=\sum_{p\in \Lambda_J^{m+1}}\|U[p]f-U[p]f'\|^2+ \sum_{p\in \Lambda_J^m}\|S[p]f-S[p]f'\|^2.
\end{align*}
Summing the previous inequality over $0\leq m \leq M, M\in \mathbb N$, we get
$$
\sum_{m=0}^M \sum_{p\in \Lambda_J^m}\|U[p]f-U[p]f'\|^2 - \sum_{m=0}^{M} \sum_{p\in \Lambda_J^{m+1}}\|U[p]f-U[p]f'\|^2 \geq \sum_{m=0}^M \sum_{p\in \Lambda_J^m}\|S[p]f-S[p]f'\|^2.
$$
Canceling equal terms in the left hand side of this inequality we obtain
\begin{align*}
\sum_{m=0}^M \sum_{p\in \Lambda_J^m}\|S[p]f-S[p]f'\|^2 &\leq \|f - f'\|^2 - \sum_{p\in \Lambda_J^{M+1}}\|U[p]f-U[p]f'\|^2 \\
& \leq \|f - f'\|^2\,.
\end{align*}
Taking the limit when $M$ goes to infinity we deduce the result.
\end{proof}
\subsection{Energy preservation} \label{Subsec:4-2}
Equality in Lemma \ref{lemma4-1-New} cannot hold in general for a kernel $\gamma$ satisfying 
\eqref{multir}. For example, if $\gamma_0(r)=0$ for all $r=1,2, \dots, k$, so that \eqref{multir} becomes $\displaystyle \sum_{j=1}^J |\gamma_j(r)|^2 = 1, r=1,2, \dots, k$, then $\phi(x)=0$ for all $x\in G$. Therefore, $\phi*f \equiv 0$ for any $f\in L^2(G)$ and $ S[p]f \equiv 0$ for all $p\in P_J$. This leads to the following definition.
\begin{definition} \label{admissible1}
	A kernel $\gamma = \{ \gamma_j: \{1,2, \dots, k\} \to \mathbb C\}_{j=0,1,\dots, J}$ is called {\bf admissible} if in addition to \eqref{multir} it holds that
	\begin{equation} \label{admissible2}
		\beta_\gamma := \min_{1\leq r \leq k} |\gamma_0 (r)|^2 > 0\,.
	\end{equation}
\end{definition}
Under the admissibility condition \eqref{admissible2} it can be proved that the $G$-scattering transform, $S[P_J]: L^2(G) \to \ell_2(P_J, L^2(G))$ is inyective. In fact, if $f, f'\in L^2(G)$ and $S[P_J]f = S[P_J]f'$, in particular $\phi*f = \phi*f'$. Taking Fourier transforms in both sides and using \eqref{eq:FourierGwavelet} we obtain $\gamma_0 (r) \widehat f (\pi^r) = \gamma_0 (r) \widehat {f'} (\pi^r)$ for $r=1,2,\dots, k$. Since $\beta_\gamma >0$, $\gamma_0(r) \neq 0$ for $r=1,2,\dots, k$. Therefore, $\widehat f (\pi^r) = \widehat {f'} (\pi^r), r=1,2,\dots, k$. By the Fourier inversion formula (see \cite{terras_1999}, Chapter 15, Theorem 2), we conclude $f=f'$ in $L^2(G).$

We can quantify this inyectivity. In fact, by the definition of $\| \ \|_{2,J}$ for $f, f' \in L^2(G)$ we can write
	$$
	\|S[P_J]f - S[P_J]f'\|_{2,J}^2 \geq \| \phi*f - \phi*f'\|^2 = \| \phi*(f - f')\|^2\,.
	$$
We now use Plancherel identity \eqref{eq:Plancherel} twice and condition \eqref{admissible2} to obtain
$$
\|S[P_J]f - S[P_J]f'\|_{2,J}^2 \geq \sum_{r = 1}^k\sum_{i,j=1}^{d_r} d_r |\gamma_0(r)|^2 \left|(\widehat{f}-\widehat{f'})(\pi^r)_{ij}\right|^{2} \geq \beta_\gamma \|f - f'\|^2\,.
$$

More important, under the admissibility condition given in Definition \ref{admissible1}, we are able to prove that the $G$-scattering transform preserves energy, that is, equality holds in Lemma 
\ref{lemma4-1-New}. Before proving this, we show a result about the energy decay rate of the scattering propagator $U$ at each level $m\in \mathbb N$.
\begin{proposition} \label{pro-4-4-New}
	Let $\gamma = \{ \gamma_j: \{1,2, \dots, k\} \to \mathbb C\}_{j=0,1,\dots, J}$ be an admissible kernel as given in Definition \ref{admissible1}. Then, for each $m=1,2,\dots$, the scattering propagator $U$ has norm decay rate $\alpha = 1-\beta_\gamma < 1$ at the $m$-th layer, meaning that for all $f\in L^2(G)$,
	\begin{equation} \label{eq:decay2}
		\sum_{p\in \Lambda_J^m} \|U[p]f\|^2 \leq \alpha \sum_{p\in \Lambda_J^{m-1}} \|U[p]f\|^2\,.
	\end{equation}
\end{proposition}
\begin{proof}
	For any path $p\in P_J$ and $f\in L^2(G)$, by definition of $S[p]f$ and Plancherel equality \eqref{eq:Plancherel} we can write
	\begin{equation*}
		\|S[p]f\|^2=\|\phi*U[p]f\|^2= \sum_{r=1}^k\sum_{i,j=1}^{d_r} d_r |\gamma_0(r)|^2  \left|\widehat{U[p]f}(\pi^r)_{i,j}\right|^2.
	\end{equation*}
Since $|\gamma_0(r)|^2 \geq \beta_\gamma$ for all $r=1,2,\dots,k$ we get
\begin{align*}
	\|S[p]f\|^2 &\geq \beta_\gamma {\sum_{r=1}^k\sum_{i,j=1}^{d_r} d_r\left| (\widehat{U[p]f}(\pi^r))_{i,j}\right|^2}\\
	&=\beta_\gamma \|U[p]f\|^2\,,
\end{align*}
where we have used again Plancherel equality \eqref{eq:Plancherel}. Summing the above inequality over all $p\in \Lambda_J^{m-1}$ we have
\begin{equation*}
	\sum_{p\in \Lambda_J^{m-1}} \|S[p]f\|^2 \geq \beta_\gamma \sum_{p\in \Lambda_J^{m-1}}\|U[p]f\|^2.
\end{equation*}
Using the structural property of the scattering network given in Lemma \ref{lem:3.2.2} we obtain
\begin{align*}
	\sum_{p\in \Lambda_J^{m}}\|U[p]f\|^2 &= \sum_{p\in \Lambda_J^{m-1}}\|U[p]f\|^2-\sum_{p\in \Lambda_J^{m-1}}\|S[p]f\|^2 \\
	&\leq (1-\beta_\gamma) \sum_{p\in \Lambda_J^{m-1}}\|U[p]f\|^2 = \alpha \sum_{p\in \Lambda_J^{m-1}}\|U[p]f\|^2\,. \qedhere
\end{align*}
\end{proof}

As a corollary we can prove that the energy carried out by the extracted features of the $G$-scattering transform at level $m, m\geq 1,$ decreases exponentially in the presence of the admissibility condition.
\begin{corollary} \label{corollary2}
Let $\gamma = \{ \gamma_j: \{1,2, \dots, k\} \to \mathbb C\}_{j=0,1,\dots, J}$ be an admissible kernel as given in Definition \ref{admissible1}. Then, for each $m=1,2, \dots$ and each $f\in L^2(G)$,
$$
\sum_{p\in \Lambda_J^m} \|S[p]f\|^2 \leq \left(\max_{1\leq r \leq k} |\gamma_0(r)|^2\right) \alpha^m \|f\|^2\,,
$$
where $\alpha = 1 -\beta_\gamma <1$ and $\beta_\gamma$ is defined in \eqref{admissible2}.	
\end{corollary}
\begin{proof}
Since $S[p]f= \phi*f$ for each $p\in \Lambda_p^m$, by Plancherel equality \eqref{eq:Plancherel} and \eqref{eq:FourierGwavelet}
\begin{align*}
    \|S[p]f\|^2 &= \sum_{r=1}^k \sum_{i,j=1}^{d_r} d_r \left|\widehat {\phi*Uf}(\pi^r)_{i,j}\right|^2\\
    &= \sum_{r=1}^k \sum_{i,j=1}^{d_r} d_r |\gamma_0(r)|^2 \left|\widehat {Uf}(\pi^r)_{i,j}\right|^2\\
    &\leq \left(\max_{1\leq r \leq k} |\gamma_0(r)|^2\right) \|U[p]f\|^2\,.
\end{align*}
Summing over all $p\in \Lambda_J^m$ and using \eqref{eq:decay2} $m$ times, the result follows.
\end{proof}

\begin{theorem} ({\bf Energy preservation}) \label{th:energy}
	Let $\gamma = \{ \gamma_j: \{1,2, \dots, k\} \to \mathbb C\}_{j=0,1,\dots, J}$ be an admissible kernel as given in Definition \ref{admissible1}. Then, for each $f\in L^2(G)$,
	$$
	\|S[P_J]f\|_{2,J} = \|f\|\,.
	$$
\end{theorem}
\begin{proof}
	By the structural property of the scattering network given in Lemma \ref{lem:3.2.2}, for all $M\in \mathbb N$,
	\begin{align} \label{eq:energy2}
		\sum_{m=0}^M \sum_{p\in \Lambda_J^m} \|S[p]f\|^2 &= \sum_{m=0}^M \sum_{p\in \Lambda_J^m} \|U[p]f\|^2 - \sum_{m=0}^M \sum_{p\in \Lambda_J^{m+1}} \|U[p]f\|^2 \nonumber \\ 
		&= \|f\|^2 -  \sum_{p\in \Lambda_J^{M+1}} \|U[p]f\|^2\,.
	\end{align}
    By Proposition \ref{pro-4-4-New}, since $\alpha = 1-\beta_\gamma < 1$,
    $$
    0\leq \lim_{M\to \infty} \sum_{p\in \Lambda_J^{M+1}} \|U[p]f\|^2 \leq \lim_{M\to \infty} \alpha^M \|f\|^2 = 0\,.
    $$
    Taking limits when $M\to \infty$ in \eqref{eq:energy2} we obtain the result.
\end{proof}

\subsection{Equivariance and approximate invariance} \label{Subsec:4-3}
Consider the left and right regular representations of the finite group $G$ on $L^2(G)$. These are
$$
L: G \to \mathcal U(L^2(G)), \quad L_gf(x) = f(g^{-1}x)
$$
and
$$
R: G \to \mathcal U(L^2(G)), \quad R_gf(x) = f(xg),
$$	
for $g,x \in G$ and $f\in L^2(G).$

For $\psi\in L^1(G)$ consider the convolution operator $C_\psi : L^2(G) \to L^2(G)$ given by $C_\psi (f) = \psi * f.$ This convolution operator is {\bf equivariant} (also termed {\bf covariant} in some literature) with respect to the right regular representation; that is, for $f\in L^2(G)$,
\begin{equation} \label{eq_right-cov}
	C_\psi (R_g f)= R_g C_\psi(f), \quad  g\in G\,.
\end{equation}
This follows inmediately from the definition of convolution given in \eqref{def:convolution}.

If $\psi\in L^1(G)$ is also a class function (see \eqref{eugenio:2}), $C_\psi$ is equivariant with respect to the left regular representation, that is, for $f\in L^2(G)$,
	\begin{equation} \label{eq_left-cov}
		C_\psi (L_g f)= L_g C_\psi(f), \quad  g\in G\,.
	\end{equation}
Indeed, since $\psi$ is a class function, for $x\in G$ use \eqref{eugenio:5} and \eqref{def:convolution} to write
$$
C_\psi(L_g f)(x) = \psi * L_g f(x) = L_g f * \psi  = \frac{1}{|G|} \sum_{y\in G} f(g^{-1}y)\psi(y^{-1}x)\,.
$$
With $y^{-1}x = z$,
$$
C_\psi (L_g f)(x) = \frac{1}{|G|} \sum_{y\in G} f(g^{-1}x z^{-1})\psi(z) = f*\psi (g^{-1}x) = \psi * f (g^{-1}x) = L_g C_\psi(f)\,,
$$
where we have used \eqref{def:convolution} and that $\psi$ is a class function.

It is easy to see that the modulus operator commutes with $R_g$ and $L_g$ for all $g\in G$. Since the scattering networks described in Section \ref{section a} are made with convolution with class functions and modulus, we have the following result.
\begin{proposition} \label{eq:equivariance2}
	Let $\gamma = \{ \gamma_j: \{1,2, \dots, k\} \to \mathbb C\}_{j=0,1,\dots, J}$ be a kernel and $S$ the $G$-scattering operator. For all $g\in G$, $p\in P_J$, and $f\in L^2(G)$,
	$$  a) \  S[p]L_g f = L_g S[p]f\,,\qquad \text{and}  \qquad b) \ S[p]R_g f = R_g S[p]f\,.$$
\end{proposition}

Another property of operators is invariance. For the convolution operators $C_\psi$ and the right and left regular representations this means
\begin{equation} \label{eq:invariance2}
	  a) \ S[p]L_g f = S[p]f\,,\qquad \text{and}  \qquad b) \ S[p]R_g f = S[p]f\,.
\end{equation}
$f\in L^2(G)$ and $g\in G$. If \eqref{eq_right-cov} and part $b)$ of \eqref{eq:invariance2} hold simultaneously, for all $g\in G$ and all $x\in G$,
$$
L_g C_\psi (f)(x) = C_\psi(f)(xg).
$$
Taking $x=e$, the identity of the group $G$, it follows that $C_\psi f(g) = C_\psi (e)$, and thus, $C_\psi(f)$ is a constant function in $G$. Similarly if
\eqref{eq_left-cov} and part $a)$ of \eqref{eq:invariance2} hold simultaneously. Consequently, we cannot have, in general, invariance of the $G$-scattering transform. Nevertheless, we can prove that at the depth of the scattering propagator increases, the $G$-scattering features become closer to be invariant, and, hence, closer to be constant functions.
\begin{theorem} ({\bf Approximate invariance}) \label{th:approxinv}
	Let $\gamma = \{ \gamma_j: \{1,2, \dots, k\} \to \mathbb C\}_{j=0,1,\dots, J}$ be an admissible kernel as given in Definition \ref{admissible1}. For each $m\geq 1, f\in L^2(G), g\in G$,
	$$
	\sum_{p\in \Lambda_J^m} \| S[p]L_g f - S[p]f\|^2 \leq  \left(\max_{1\leq r \leq k} |\gamma_0(r)|^2\right) 4\, \alpha^m \|f\|^2\,,
	$$
	where $\alpha = 1 -\beta_\gamma <1$ and $\beta_\gamma$ is defined in \eqref{admissible2}. Moreover, a similar result holds replacing $L_g$ by $R_g, g\in G.$	
\end{theorem}
\begin{proof}
	By part $a)$ of Proposition \ref{eq:equivariance2},
	\begin{align*}
		\sum_{p\in \Lambda_J^m} \| S[p]L_g f - S[p]f\|^2 &= \sum_{p\in \Lambda_J^m} \|L_g S[p] f - S[p]f\|^2 \nonumber \\
		&\leq \| L_g - I\|^2 \sum_{p\in \Lambda_J^m} \| S[p]f\|^2 
	\end{align*}
    In view of Corollary \ref{corollary2}, we only need to prove the estimate $\|L_g - I\|^2 \leq 4.$ This follows from the following computation. For any $f\in L^2(G)$,
    \begin{align*}
    	\|L_g f - f\|^2 &=\frac{1}{|G|} \sum_{y\in G} |f(g^{-1}y) - f(y)|^2 
    	\leq \frac{1}{|G|} \sum_{y\in G} (|f(g^{-1}y)| + |f(y)|)^2 \\
    	&\leq \frac{2}{|G|} \sum_{y\in G} (|f(g^{-1}y)|^2 + |f(y)|^2) = 4 \|f\|^2\,.
    \end{align*}
The proof for the right regular representation is similar.	
\end{proof}

\

 \section{Relaxing the admissibility condition}\label{relaxation}

If the kernel $\gamma$ does not satisfy \eqref{admissible2}, it holds that $\gamma_0(r)=0$ for some $r\in\{1,2, \dots, k\}.$ Hence, the proof of Proposition \ref{pro-4-4-New} does not give a norm decay rate smaller than 1 for the scattering propagator. In this case, we are going to show how to obtain a norm decay rate $\alpha <1$ for the scattering propagator. To do this, we need to briefly explain the result contained in Theorem 5 of \cite{kueh_olson_rockmore_tan_2001}.

Let $S \subset \{1,2, \dots, k\}$ be a subset of indexes or non-equivalent irreducible unitary representations of a finite group $G$. Consider the representation of $G$ given by 
$$
\rho^S :=\bigoplus_{r\in S} d_r \pi^r\,,
$$
whose character is $\chi^{\rho^S}=\sum_{r\in S} d_r \chi^r$. For a representation $\rho$ of $G$, denote by $\rho^{\dagger}$ the contragredient representation given by $\rho^\dagger (x) = \overline{\rho(x)}^t$ for all $x\in G$.

Consider now the representation $\rho^S\otimes(\rho^S)^{\dagger}$. It has a decomposition in terms of irreducible unitary representations of $G$, of the form
	\begin{equation*}
		\rho^S\otimes(\rho^S)^{\dagger}= \bigoplus_{r=1}^k n_S(r) \pi^r.
	\end{equation*}
Let $T(S) = \{r\in \{1,2, \dots, k\} : n_S(r) \neq 0 \}$. Define $\displaystyle deg(S)= \sum_{r\in S} d_r^2.$

\begin{theorem}[Theorem 5 in \cite{kueh_olson_rockmore_tan_2001}]  \label{th:Kueh}
Let $f\in L^2(G)$ be nonnegative and $S$ a subset of indexes of the inequivalent irreducible representations of $G$. Then
\begin{equation*}
	\sum_{r\in T(S)} d_r \sum_{i,j=1}^{d_r} |(\widehat f (\pi^r))_{i,j}|^2 \geq \frac{\text{deg(S)}}{|G|} \| f\|^2\,.
\end{equation*}
\end{theorem}

\begin{theorem} \label{th:new}
Let $\gamma=\{\gamma_j:\{1,\ldots, k\}\to \CC\}_{0\leq j \leq J}$ be a kernel satisfying \eqref{multir}. Let $S\subset \{1, \ldots, k\}$ and suppose that $\gamma$ is such that 
$$
\beta_\gamma(S) := \frac{\text{deg(S)}}{|G|} \min_{r\in T(S)} |\gamma_0(r)|^2 > 0\,.
$$

Then, for $f\in L^2(G)$ and $m=2, 3, \dots,$ 
$$
\sum_{p\in \lambda_J^m} \|U[p]f\|^2 \leq \alpha (S) \sum_{p\in \lambda_J^{m-1}} \|U[p]f\|^2\,,
$$
where $\alpha(S):= 1 - \beta_\gamma (S) < 1.$
\end{theorem}
\begin{proof}

For any path $p\in P_J,\ p\neq \emptyset $, and $f\in L^2(G)$, by definition of $S[p]f$ and Plancherel equality \eqref{eq:Plancherel} we can write
\begin{align*}
	\|S[p]f\|^2 &=\|\phi*U[p]f\|^2= \sum_{r=1}^k\sum_{i,j=1}^{d_r} d_r |\gamma_0(r)|^2  \left|\widehat{U[p]f}(\pi^r)_{i,j}\right|^2 \\
	&\geq \sum_{r\in T(S)}\sum_{i,j=1}^{d_r} d_r |\gamma_0(r)|^2  \left|\widehat{U[p]f}(\pi^r)_{i,j}\right|^2 \\
	&\geq (\min_{r\in T(S)} |\gamma_0(r)|^2) \sum_{r\in T(S)}\sum_{i,j=1}^{d_r} d_r  \left|\widehat{U[p]f}(\pi^r)_{i,j}\right|^2 \\
	&\geq \beta_\gamma(S) \|U[p]f\|^2\,,
\end{align*}
where the last inequality is due to Theorem \ref{th:Kueh} since $U[p]f$ is nonnegative. The proof now proceeds as in the proof of Proposition
\ref{pro-4-4-New}.
\end{proof}

\begin{remark}
	Under the same conditions of Theorem \ref{th:new}, the same inequality as in Corollary \ref{corollary2} can be proved replacing $\alpha$ by $\alpha(S).$ Moreover, under these conditions we also have the energy preservation result stated in Theorem \ref{th:energy}.
\end{remark}

Note that $S$ can be different or equal to $T(S)$, as shown in the following examples.  

\begin{example}
Let $G=\ZZ/N\ZZ$. A system of irreducible characters of $G$ is $\{e^{-\frac{2\pi i}{N}\lfloor\frac{ N }{2}\rfloor (\cdot)}, \ldots, e^{\frac{2\pi i}{N}\lfloor\frac{ N - 1}{2}\rfloor (\cdot)}\}$. Let $n \leq \frac{N-1}{2}$, and let $S = \{-\lfloor\frac{ n }{2}\rfloor , \dots, \lfloor\frac{ n-1 }{2}\rfloor\}$ be the indexes of the characters $\{e^{-\frac{2\pi i}{N}\lfloor\frac{ n }{2}\rfloor(\cdot)}, \ldots, e^{\frac{2\pi i}{N}\lfloor\frac{ n-1 }{2}\rfloor(\cdot)}\}$. Then  \begin{equation}
    \chi^{\rho^{m_S}}(x)=\sum_{l=-\lfloor\frac{ n }{2}\rfloor}^{\lfloor\frac{ n -1}{2}\rfloor} e^{\frac{2\pi i l x}{N}},
\end{equation}
so
\begin{equation}
    \chi_{\rho^{m_S}}(x) \overline{\chi_{\rho^{m_S}}(x)} = \sum_{l=-\lfloor\frac{ n }{2}\rfloor}^{\lfloor\frac{ n-1 }{2}\rfloor} \sum_{m=-\lfloor\frac{ n }{2}\rfloor}^{\lfloor\frac{ n -1}{2}\rfloor} e^{\frac{2\pi i (l-m) x}{N}} 
\end{equation}
Since $l-m$ varies from $-n+1$ to $n-1$, thus $T(S) \neq S$, because $T(S) = \{-n+1,\dots,n-1\}$ contains all the indexes of $\{e^{-\frac{2\pi i}{N}(n-1)(\cdot)},\ldots, e^{\frac{2\pi i}{N} (n-1)(\cdot)}\}$ and has $2n-1$ elements, while $S$ has $n$ elements.
\end{example}

\begin{example}
As another example, if $G=S_n$ and $S$ contains the indexes of the trivial character and the character given by the sign of the permutation ($\chi^1 = I$ and $\chi ^2 =  \operatorname{sign}$) then $deg(S)=2$ and 
\begin{equation}
    \rho_{m_S}(x)= \begin{bmatrix}
        1 & 0\\
        0 & \operatorname{sign}(x)
    \end{bmatrix},
\end{equation}
so that 
\begin{equation}
    \rho_{m_S}(x)\otimes (\rho_{m_S})^{\dagger}(x)= \begin{bmatrix}
        1 & 0 & 0 & 0\\
        0 & \operatorname{sign}(x) & 0 & 0\\ 0 &  0 &\operatorname{sign}(x) &0\\ 0 &0&0&1
    \end{bmatrix},
\end{equation}
thus $T(S)=S$.
\end{example}


\section{Numerical results} \label{numerical results}
In this section, we present an implementation of the $G$-scattering transform, focused on classification problems.
\subsection{The MNIST dataset}\label{mnist}

The MNIST dataset \cite{PyTorch} consists of $28 \times 28$ gray scale images of handwritten digits ranging from 0 to 9 (see Figure \ref{fig:mnist}). It has been used by several authors (see, for example, \cite{lecun1998gradient} and \cite{zou}) to test different classification methods.
\begin{figure}[H]
    \centering
    \includegraphics[width=0.5\linewidth]{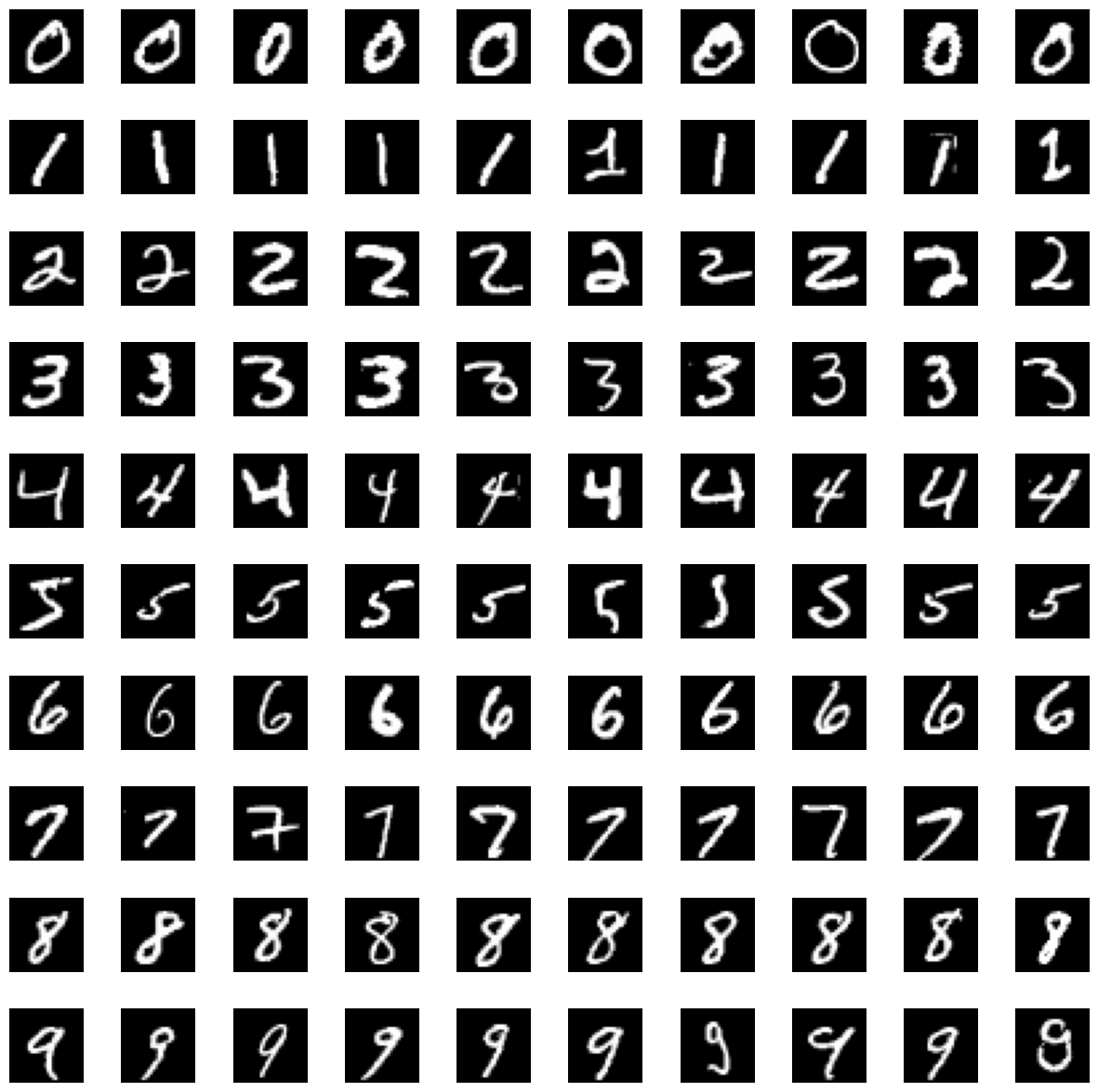}
    \caption{Example of data}
    \label{fig:mnist}
\end{figure}

There are 60,000 training
images and 10,000 testing images in total, where the task is to classify the images according to the digits. In this work, we use a subset of 4,800 training images and 2,400 test images. We model the images as functions over the group $ G = \mathbb{Z}/28\mathbb{Z} \times \mathbb{Z}/28\mathbb{Z}$ and apply a $G$-Scattering Transform with varying type and numbers of filters and depths. We compare our results with those of Zou and Lerman \cite{zou}, who constructed a graph representing
the underlying grid of the images and applied their proposed graph scattering.

\subsubsection{$G$-Scattering Transform}

Each image is interpreted as a function $f: \mathbb{Z}/28\mathbb{Z} \times \mathbb{Z}/28\mathbb{Z} \to \mathbb{R}$, where pixel intensities, normalized between $[-1, 1]$, define the function values. For $(a,b)\in \mathbb{Z}/28\mathbb{Z} \times \mathbb{Z}/28\mathbb{Z}$ the character $\chi_{(a,b)}$
is given by
$$ 
\chi_{(a,b)} (n,m) = \omega^{na+mb}\,, \qquad (n,m) \in \mathbb{Z}/28\mathbb{Z} \times \mathbb{Z}/28\mathbb{Z}\,,
$$
where \( \omega = e^{2\pi i/28} \) is a primitive 28th root of unity. Gray scale images of the real and imaginary parts of theses $28\times 28 =784$ characters are given in Figure \ref{fig:characters}.


\begin{figure}[H]
    \centering
    \begin{subfigure}{0.45\textwidth}
        \centering
        \includegraphics[width=\linewidth]{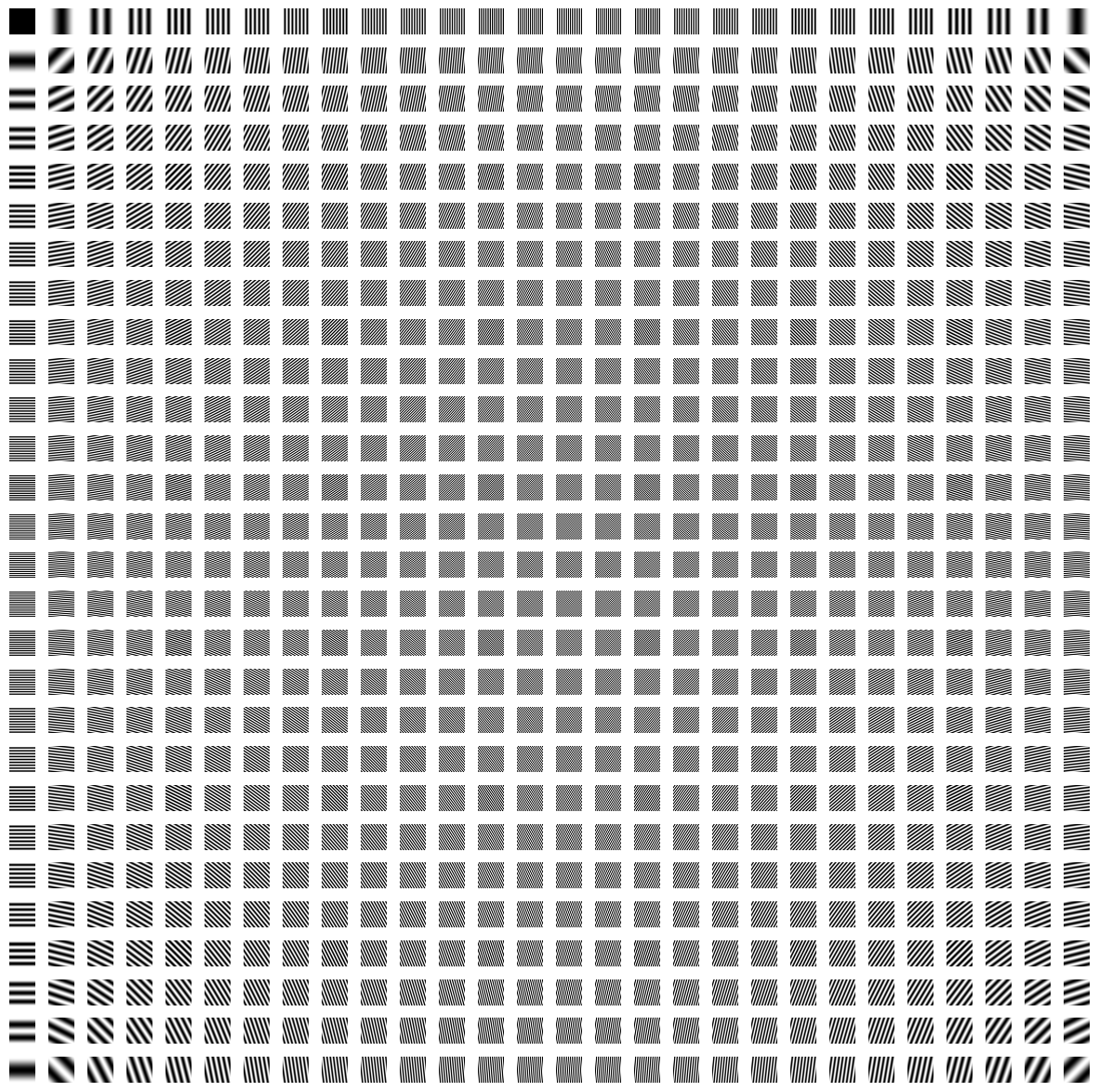}
        \caption{Real part of the characters}
    \end{subfigure}
    \hfill
    \begin{subfigure}{0.45\textwidth}
        \centering
        \includegraphics[width=\linewidth]{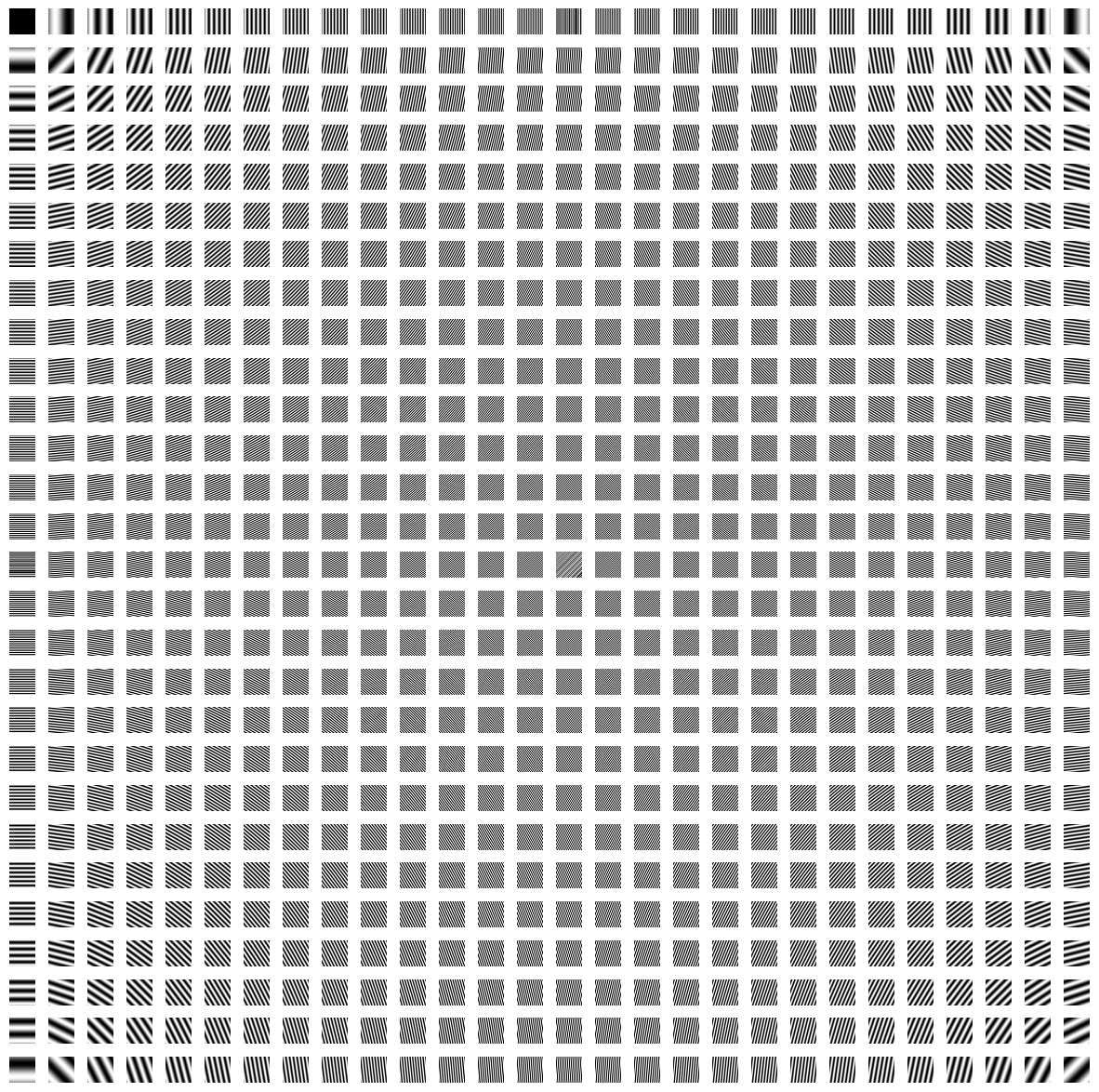}
        \caption{Imaginary part of the characters}
    \end{subfigure}
    \caption{Characters of $\mathbb{Z}/28\mathbb{Z}\times \mathbb{Z}/28\mathbb{Z}$} 
    \label{fig:characters} 
\end{figure}

We apply a $G$-Scattering Transform with 1, 2, and 3 layers and filter scales  $J=1$, $J=5$, and $J=8$. 
We fix a mexican hat wavelet with $\sigma = 2.0$ (see Figure \ref{fig:mexhat}) as the low pass filter 
\[
\widetilde{\phi}(x, y; \sigma)=\widetilde{\psi_0}(x, y; \sigma) = \left(1 - \frac{x^2 + y^2}{2\sigma^2} \right) \exp\left(-\frac{x^2 + y^2}{2\sigma^2} \right)\,.
\]

\begin{figure}[H]
	\centering
	\includegraphics[width=0.4\linewidth]{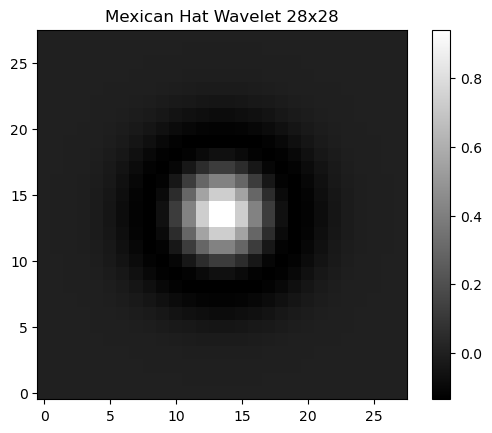}
	\caption{Mexican hat wavelet}
	\label{fig:mexhat}
\end{figure}

The families of wavelets are based on a mother wavelet rotated and dilated. The mother wavelets used are:
\begin{itemize}
    \item Shannon wavelet (see Figure \ref{fig:shannon})
\[
\widetilde{\psi}(x, y; f_x, f_y) = 
\sin(f_x x)sinc(x)\sin(f_y y)sinc(y),
\]
    \item Daubechies wavelets $db2$ (see Figure \ref{fig:db2}) (based on the python module \verb|pywt|)

    
\end{itemize}

\begin{figure}[H]
    \centering
    \includegraphics[width=1\linewidth]{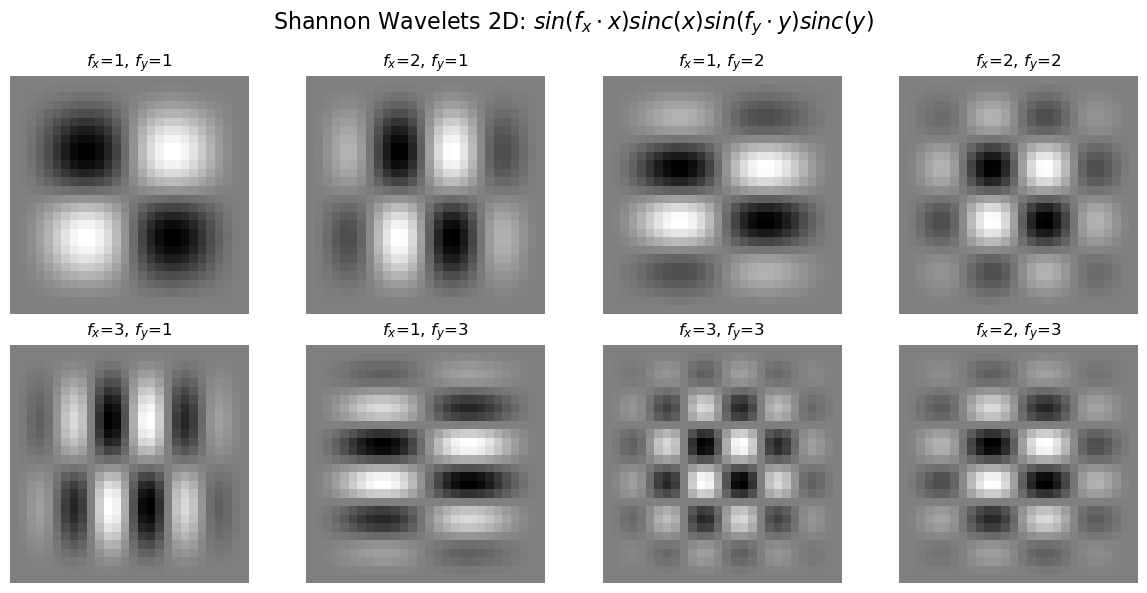}
    \caption{Shannon Wavelets}
    \label{fig:shannon}
\end{figure}



\begin{figure}[H]
	\centering
	\includegraphics[width=1\linewidth]{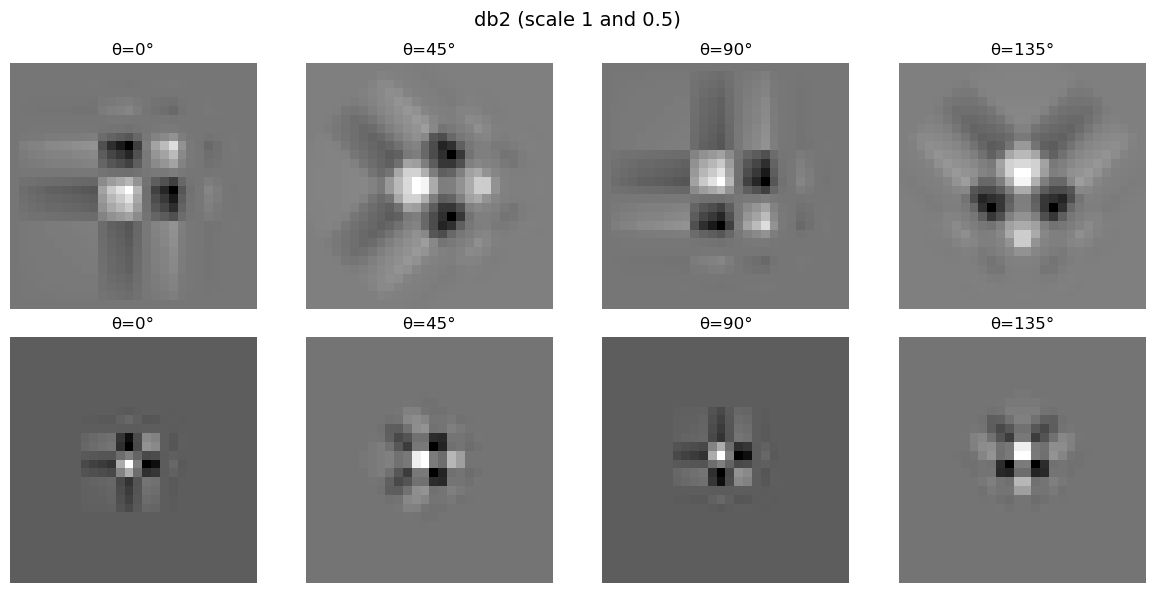}
	\caption{Daubechies wavelet $db2$ rotated by $\theta$ (top row) and dilated by $0.5$ and rotated by $\theta$ (bottom row).}
	\label{fig:db2}
\end{figure}


We normalize the functions $\{ \widetilde{\phi}, \widetilde{\psi_j}: j \in 1, \ldots, J\}$ in order to satisfy the unitary conditions \eqref{multir}. To do this, write
$$
\widetilde {\phi}(n,m)= \sum_{(a,b)\in\mathbb{Z}/28\times \mathbb{Z}/28\mathbb{Z}} \widetilde{\gamma_0}(a,b) \chi_{(a,b)}(n,m)
$$
and
$$
\widetilde {\psi_j}(n,m)= \sum_{(a,b)\in\mathbb{Z}/28\times \mathbb{Z}/28\mathbb{Z}} \widetilde{\gamma_j}(a,b) \chi_{(a,b)}(n,m)\,, \quad j=1, \dots, J.
$$
For $j=0, 1, \dots, J$ and $(a,b)\in \mathbb{Z}/28\times \mathbb{Z}/28\mathbb{Z}$ define
$$
\gamma_j(a,b) := \frac{\widetilde{\gamma_j}(a,b)}{\left(\sum_{j=0}^J |\widetilde{\gamma_j}(a,b)|^2\right)^{1/2}}\,,
$$
so that the condition \eqref{multir} is satisfy for the kernel $\{ \gamma_j (a,b): (a,b)\in \mathbb{Z}/28\times \mathbb{Z}/28\mathbb{Z} \}_{0\leq j \leq J}$. In our experiments, we use the low pass filter
$$
\phi(n,m) := \sum_{(a,b)\in\mathbb{Z}/28\times \mathbb{Z}/28\mathbb{Z}} \gamma_0(a,b) \chi_{(a,b)}(n,m)
$$
and the wavelets
$$
\psi_j(n,m) := \sum_{(a,b)\in\mathbb{Z}/28\times \mathbb{Z}/28\mathbb{Z}} \gamma_j(a,b) \chi_{(a,b)}(n,m)\,, \quad j=1, \dots, J.
$$

Figures \ref{fig:mexhat}, \ref{fig:shannon}, and \ref{fig:db2} represent Mexican hat, Shannon, and Daubechies 2 wavelets that have already been normalized according to the procedure described just described.

Figures \ref{fig:zerol1} and \ref{fig:zerol2} represent the first level scattering of an image of numbers 0 and 7 from the dataset associated to Shannon and Daubechies $db2$ wavelets with $J=8$.

\begin{figure}[H]
    \centering
    \includegraphics[width=1\linewidth]{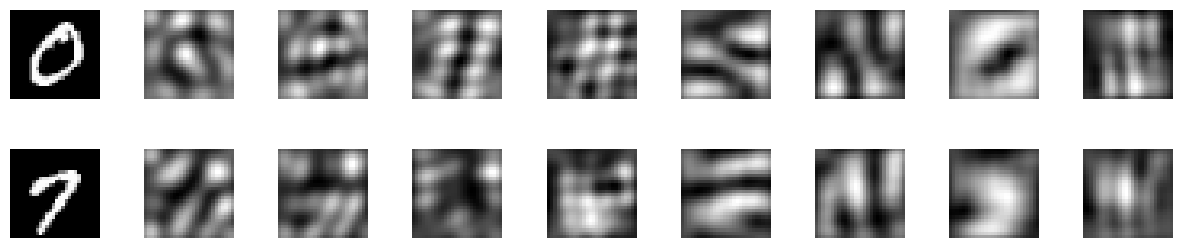}
    \caption{First level scattering associated to Shannon wavelets $\phi*|\psi_j*f|, j\in \{1,\ldots, 8\}$ for the image of number $0$ (top row) and number $7$ (bottom row).}
    \label{fig:zerol1}
\end{figure}

\begin{figure}[H]
    \centering
    \includegraphics[width=1\linewidth]{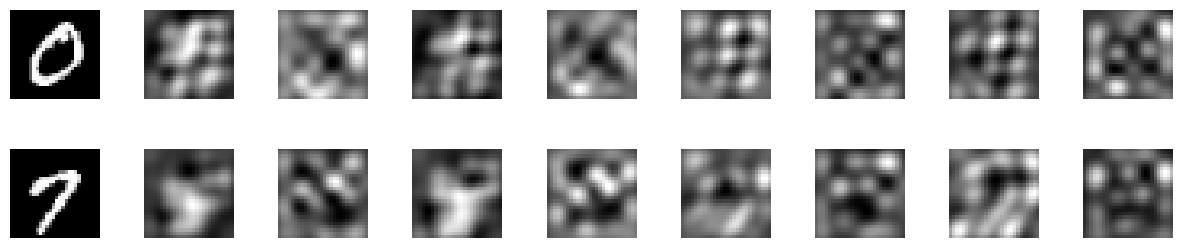}
    \caption{First level scattering associated to Daubechies $db2$ wavelets $\phi*|\psi_j*f|*, j\in \{1,\ldots, 8\}$ for the image of number $0$ (top row) and number $7$ (bottom row).}
    \label{fig:zerol2}
\end{figure}

\subsubsection{Classification and Results}
The resulting feature vectors are subsequently reduced in dimensionality to 1,000 components using Principal Component Analysis (PCA). The extracted features are classified using a one-vs-all Support Vector Machine (SVM) classifier. We evaluate the classification accuracy for different configurations of the $G$-Scattering Transform. Table \ref{tab:results}  presents the classification results over the 2400 test images. Compared with Zou and Lerman SVM over a graph scattering transform \cite{zou} the Shannon wavelets with $M = 2$ layers has an accuracy of $95.68\%$ which is less than our level 2 result for Shannon and $db2$ with $J=8$ ($96.50\%$ and $96.75\%$ respectively). For $M=3$, Zou and Lerman with a SVM classifier \cite{zou} obtained $96.95\%$ accuracy, less than our level 3 results for Shannon with $J=8$ ($97.04\%$) and for $db2$ with $J=1$($96.87\%$), $J=5$($96.91\%$), and $J=8$($97.08\%$).
 
Our results indicate that deeper than $M=3$ scattering transformations do not improve classification performance, with in general three layers and $J=8$ achieving the highest accuracy. 
 
Other wavelets, such as Daubechies wavelet $db4$ and Coifman Wavelet $coif1$,  both based on the python module $pywt$, have been used in our experiments. The results do not improve the results obtained for Daubechies wavelet $db2$ in Table \ref{tab:results}.

\begin{table}[H]
    \centering
    \begin{tabular}{c c c c}
        \toprule
        \textbf{Type} & \textbf{Depth} & \textbf{J} & \textbf{SVM Accuracy (\%)} \\
        \midrule
        Original Images & No Scattering & - & 92.00 \\
        Shannon & 1 Layer  & $J=1$   & 93.75 \\
        Shannon & 1 Layer  & $J=5$   & 94.00 \\
        Shannon & 1 Layer  & $J=8$  & 95.00 \\
        Shannon & 2 Layers & $J=1$   & 95.54 \\
        Shannon & 2 Layers & $J=5$   & 95.58 \\
        Shannon & 2 Layers & $J=8$  &\textbf{ 96.50} \\
        Shannon & 3 Layers & $J=1$   & 95.00 \\
        Shannon & 3 Layers & $J=5$   & \textbf{95.90} \\
        Shannon & 3 Layers & $J=8$  & \textbf{97.04} \\
        \midrule
\midrule
db2 & 1 Layer  & $J=1$   & 93.75 \\
        db2 & 1 Layer  & $J=5$   & 94.50 \\
        db2 & 1 Layer  & $J=8$  & 95.45 \\
        db2 & 2 Layers & $J=1$   & 95.54 \\
        db2 & 2 Layers & $J=5$   & 95.58 \\
        db2 & 2 Layers & $J=8$  & 96.75 \\
        db2 & 3 Layers & $J=1$   & \textbf{96.87} \\
        db2 & 3 Layers & $J=5$   & \textbf{96.91} \\
        db2 & 3 Layers & $J=8$  & \textbf{97.08} \\
%
        
        \bottomrule
    \end{tabular}
    \caption{Classification accuracy (\%) of the $G$-Scattering Transform over de MNIST dataset.}
    \label{tab:results}
\end{table}


%
%

\subsection{Distinguishing between bark or meow}\label{classsound}
The aim of this example is to classify sound files as meow or bark. We use labeled audio samples categorized as either meows (50 files) or barks (46 files) from the \cite{suh_2019_3563990} database. The audios are pulse-code modulated with a bit depth of 16 and a sampling rate of $44.1$ kHz:

\begin{itemize}
	\item Bit-depth $16$: The amplitude of each sample in the audio is $2^{16}$ $(=65536)$ possible values.
	\item Samplig rate = $44.1$ kHz: Each second in the audio consists of $44100$ samples. So, if the duration of the audio file is $3.2$ seconds, the audio will consist of $44100 \cdot 3.2 = 141120$ values.
\end{itemize}

We start by performing the Kaiser Fast transform of the data, using the Python library Librosa \cite{mcfee2015librosa}, with a sampling rate of $16000$ (see Figure \ref{fig:kaiser}). Kaiser Fast transform is a resampling method used for changing the sampling rate of an audio signal. It involves convolving the input signal with a kernel derived from the Kaiser window, leading to a modified signal with a different sampling rate. This step allows a higher computational speed.

\begin{figure}[h!]
	\centering
	\includegraphics[width=0.65\textwidth]{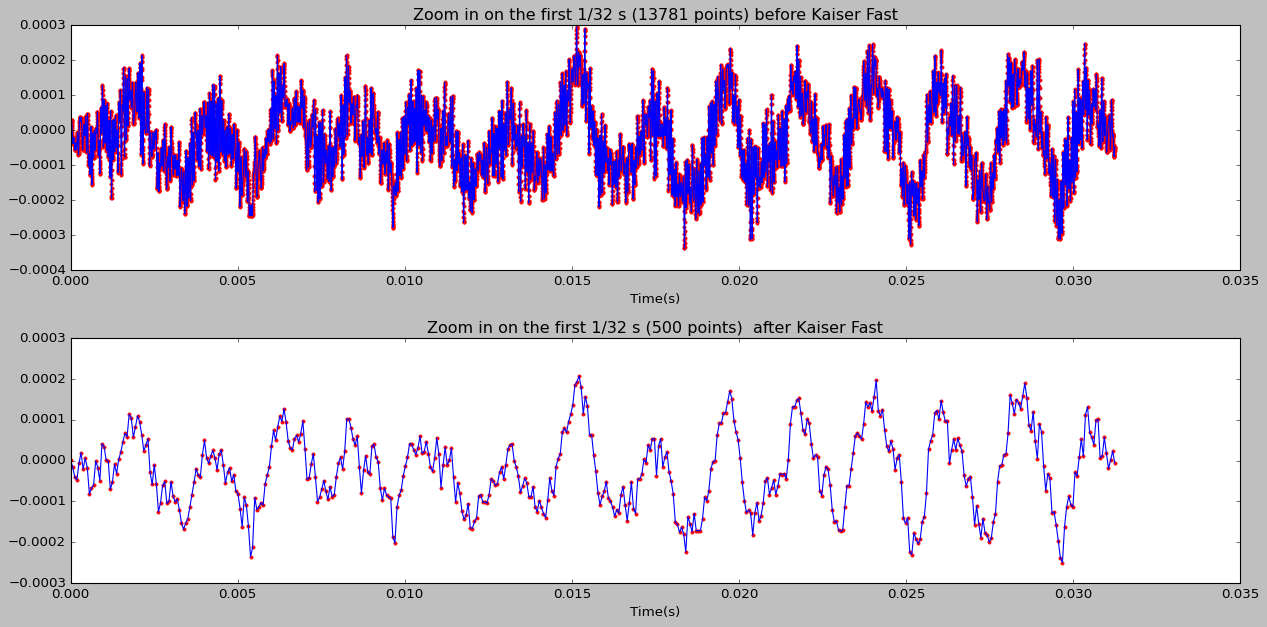} 
	\caption{The original length of an example signal is $6,209,280$ (top row); after the Kaiser Fast transform and the selection of $2$ seconds, we end up with a signal of length $32,000$ (bottom row).}
	\label{fig:kaiser} 
\end{figure}

\

To classify the signals, we choose to take the points from the first 2 seconds of each audio. The amplitude of the signals is then normalized between $[-1/2,1/2]$.
\begin{equation} \label{normalization}
\text{normalized data} = \frac{\text{data} - \min(\text{data})}{\max(\text{data}) - \min(\text{data}) + \epsilon}-\frac{1}{2},
\end{equation}
where $\epsilon$ is a small constant (sucha as $10^{-6}$) added to the denominator to avoid division by zero. Normalizing data makes it easier to interpret the discrete wavelet transform coefficients, so that they can be compared more directly to gauge their relative importance.

\

Figure \ref{fig:n1} shows the first two seconds of 5 meow files and 5 bark files, after performing the Kaiser transform and the normalization \eqref{normalization}. Thus, each signal has 32,000 values. For future reference a Short Time Fourier transform with Kaiser window is performed in the ten images of Figure \ref{fig:n1}, resulting in the spectrograms of Figure \ref{fig:spectrograms}. These sepectrograms have a time duration of 2 seconds and a frequency range of 8,000 hertz.

\begin{figure}[h!] 
\centering
\includegraphics[width=.93\textwidth]{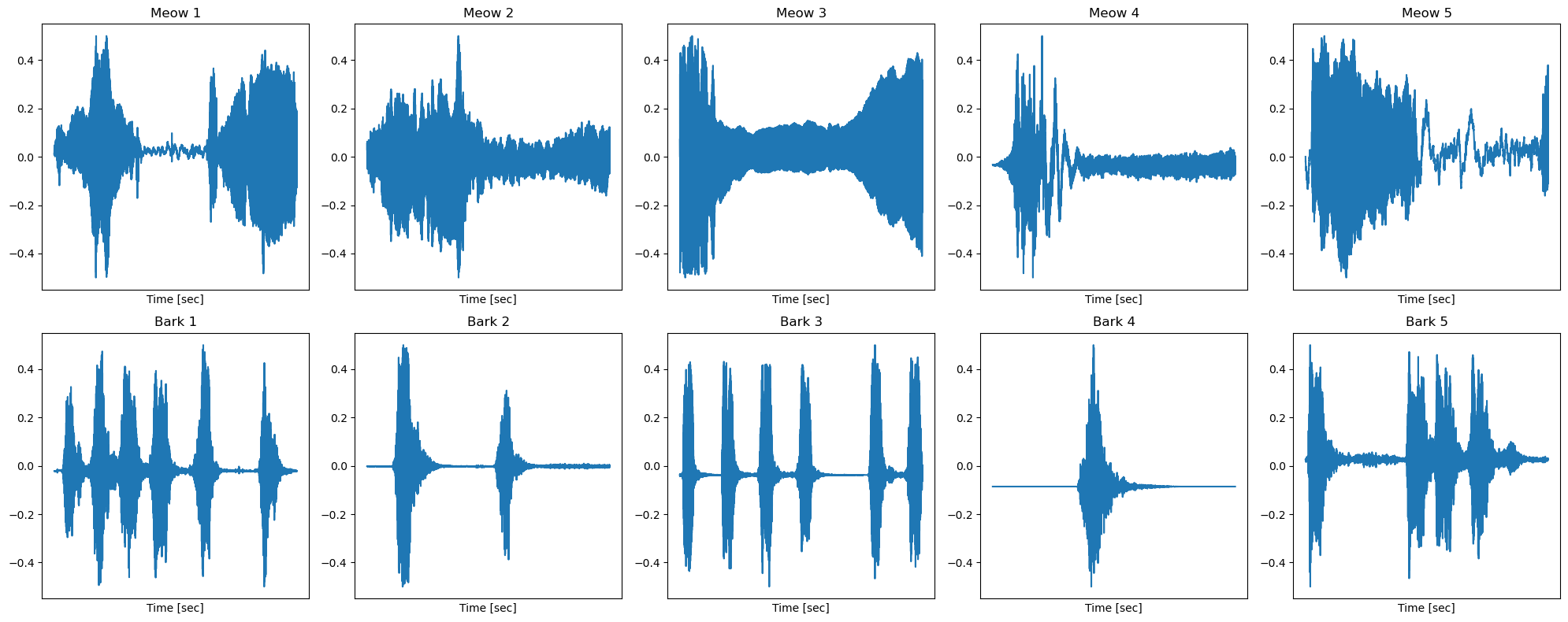}
\caption{First row: data labeled as `Meow`, second row: data labeled and as 'Bark`. Signal's amplitudes are normalized between $[-1/2,1/2]$ and only two seconds are taken.}
\label{fig:n1} 
\end{figure}


\begin{figure}[h!] 
	\centering
	\includegraphics[width=.93\textwidth]{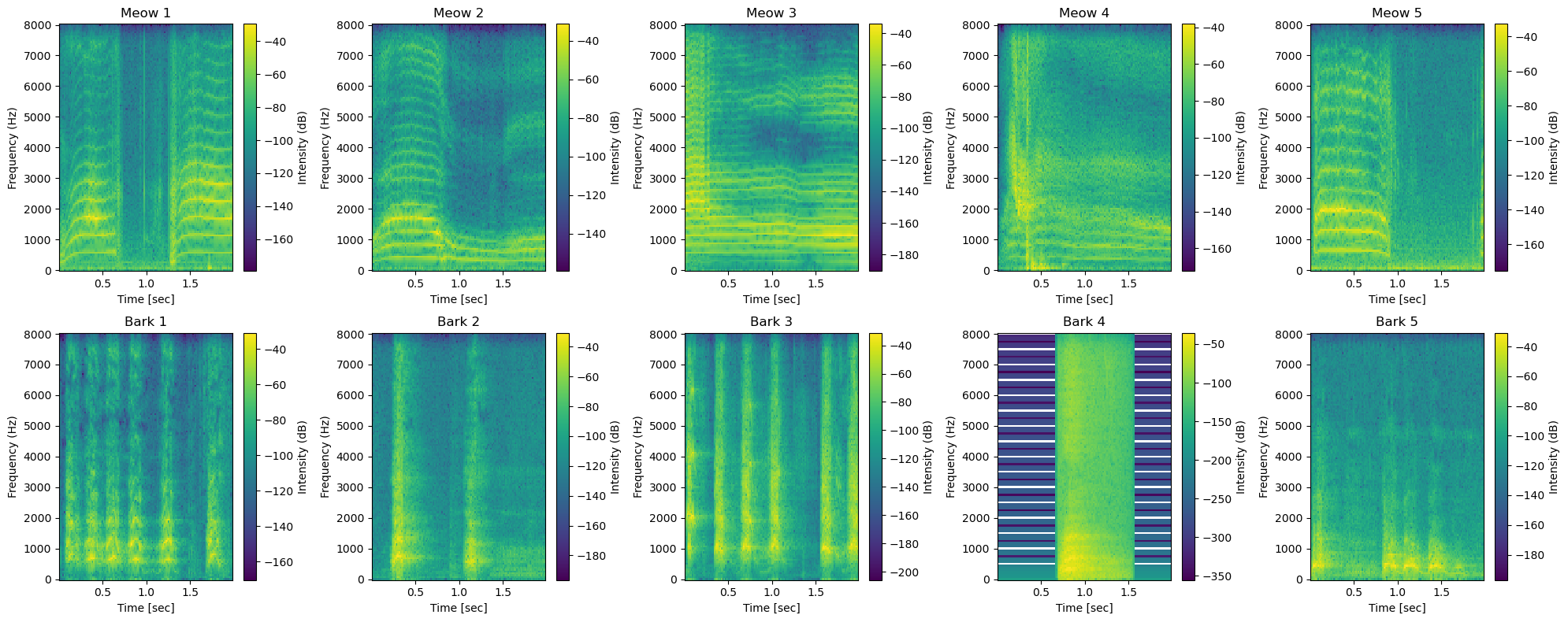} 
	\caption{Spectrograms of the audio exhibit in Figure \ref{fig:n1} created with scipy.signal.}
	\label{fig:spectrograms} 
\end{figure}

\newpage

\subsubsection{The group}\label{ex:oo}
The group we are going to use is the (non-abelian) Affine group over a finite field. Let $p$ be a prime. Consider the group of transformations $x\mapsto ax+b,\, a, b \in\mathbb{F}_p$, $a\neq0$, denoted by $\operatorname{Aff}(\mathbb{F}_p)$, with composition operation. It has $p(p-1)$ elements. 
The elements of $\operatorname{Aff}(\mathbb{F}_p)$ can be identified with matrices of the form
\begin{equation}
    \begin{bmatrix}
        a & b\\
        0 & 1
    \end{bmatrix}, \quad a,b \in \mathbb{F}_p, a\neq 0\,,
\end{equation}
and the operation is multiplication of matrices.
Since
\begin{equation*}
\begin{bmatrix}
        c & d\\
        0 & 1
    \end{bmatrix}
    \begin{bmatrix}
        a & b\\
        0 & 1
    \end{bmatrix}
    \begin{bmatrix}
        c & d\\
        0 & 1
    \end{bmatrix}^{-1} = \begin{bmatrix}
        a & (1-a)d+bc\\
        0 & 1
    \end{bmatrix},
\end{equation*}
the conjugacy classes are
\begin{equation*}
    C^0 = \left \{\begin{bmatrix}
        1 & 0\\
        0 & 1
    \end{bmatrix} \right\}
\end{equation*}
\begin{equation*}
    C^1 = \left \{\begin{bmatrix}
        1 & b\\
        0 & 1
    \end{bmatrix}, \quad b \in \mathbb{F}_p\setminus \{0\}\right\},
\end{equation*}
\begin{equation*}
    C^{a} = \left \{\begin{bmatrix}
        a & b\\
        0 & 1
    \end{bmatrix}, \quad b \in \mathbb{F}_p\right\}, a\in \{2, 3, \ldots, p-1\}.
\end{equation*}
Thus $\operatorname{Aff}(\mathbb{F}_p)$ has $p$ conjugacy classes, and, hence $p$ irreducible unitary representations (see e.g. \cite[Chapt. 15, Theorem 3]{terras_1999}). Let $g$ be a generator of $\mathbb{F}_p^\times$, so that all non-zero elements of $\mathbb{F}_p$ are $g^j$ for some $j\in\{0, \ldots, p-2\}$. By the properties of the matrix product, for $k \in \{0,\ldots, p-2\}$, the homomorphisms below are one-dimensional representations:
\begin{align*}
    \chi^k: &\quad \operatorname{Aff}(\mathbb{F}_p)  \quad \quad\longrightarrow \mathbb{C}\\
    &\begin{bmatrix}
        g^j & b\\
        0 & 1
    \end{bmatrix} \longmapsto \exp{\left(\frac{2\pi i kj}{p-1}\right)}, \quad j\in\{0, \ldots, p-2\}, \ b\in \mathbb{F}_p
\end{align*}
Since $\displaystyle |\operatorname{Aff}(\mathbb{F}_p)|=\sum_{i=0}^{p-1} d_i^2$ (see e.g. \cite[Chapt. 15, Lemma 2]{terras_1999}), the dimension of the last irreducible representation is $p-1$ . Therefore, $\chi^{p-1}(C^0)=p-1$. Using the orthogonality relations for characters (\cite[Chapt. 15, Theorem 3]{terras_1999}), we can complete the character table of $\operatorname{Aff}(\mathbb{F}_p)$:  $\chi^{p-1}(C^1)=-1$ and $\chi^{p-1}(C^{g^k})=0$ for $k\in\{1, \ldots, p-2\}$. See Table  \ref{table:chars S_3}\,.
\begin{table}[h!]
\centering
\begin{tabular}{|l|r|r|r|r|r|}
\hline
	& $C^0$ & $C^1$ & $C^{g}$ & $\ldots$ & $C^{g^{p-2}}$ \\
        \hline 
        & & & & &\\ 
         & $\left[ \begin{bmatrix}
         1 & 0\\
        0 & 1
    \end{bmatrix} \right]$ & $\left[ \begin{bmatrix}
        1 & b\\
        0 & 1
    \end{bmatrix}\right]$ & $\left [\begin{bmatrix}
        g & b\\
        0 & 1
    \end{bmatrix}\right]$ & $\ldots$ & $\left[\begin{bmatrix}
        g^{p-2} & b\\
        0 & 1
    \end{bmatrix}\right]$\\
     & & & & &\\
	\hline 
 $\chi^0$ & 1 & 1 & 1 & 1 & 1 \\
	\hline
 $\chi^1$ & 1 & 1 & $e^{\frac{2\pi i}{p-1}}$ & $\ldots$ & $e^{\frac{2\pi i(p-2)}{p-1}}$ \\
	\hline $\ldots$ &$\ldots$ & $\ldots$ & $\ldots$ & $\ldots$ & $\ldots$ \\
        \hline
        $\chi^{p-2}$ & 1 & 1 & $e^{\frac{2\pi i(p-2)}{p-1}}$ & \ldots & $e^{\frac{2\pi i(p-2)^2}{p-1}}$ \\
        \hline $\chi^{p-1}$ & $p-1$ & $-1$ & 0 & \ldots & 0 \\
\hline
\end{tabular}
\caption{Character table of $\operatorname{Aff}(\mathbb{F}_p)$}\label{table:chars S_3}
\label{table: chars AffFp}
\end{table} 

This groups, despite not being abelian, have a unique irreducible representation of degree greater than $1$. This makes interpreting its scattering transform easier, thanks to the decomposition provided by the Peter-Weyl theorem (In fact there are only two families of groups with this property \cite{seit}).

\subsubsection{From sounds to functions defined on $\operatorname{Aff}(\mathbb{F}_p)$} \label{Subsec-6-2-2}
To define complex valued functions on $ \operatorname{Aff}(\mathbb{F}_p)$, we use a computational approach inspired by wavelet analysis. 
For $f\in L^2(\mathbb R)$, the continuous wavelet transform (CWT) is defined over $\mathbb{R}^+\times \mathbb{R}$ as a dot product of the function $f$ and wavelet $\Psi$ dilated and translated:
\begin{equation} \label{CWT}
 W f(\delta, s) = \int_{\mathbb R} f(t) \frac{1}{\sqrt{\delta}} \overline{\Psi\left(\frac{t-s}{\delta}\right)}\, dt\,.
\end{equation}
The wavelet we use is the Morlet wavelet cmorB-C from the module pyct.cwt from PyWavelets  (with $B=3.5$ and $C=1.5$), which is
\begin{equation} \label{eq:morlet}
\Psi(t) = \frac{1}{\sqrt{\pi B}}\, e^{-\frac{t^2}{B}}\, e^{2 \pi i C t}\,.
\end{equation}
It is easy to see that $\Psi$ has $L^1(\mathbb R)$-norm equal to 1 and  $L^2(\mathbb R)$-norm equal to $\displaystyle \frac{1}{(2\pi B)^{1/4}}.$ Graphs of the real and imaginary parts of $\Psi$ for $B=3.5$ and $C=1.5$ are given in Figure \ref{fig:morlet}

\begin{figure}[h!] 
\centering
\includegraphics[width=0.8\textwidth]{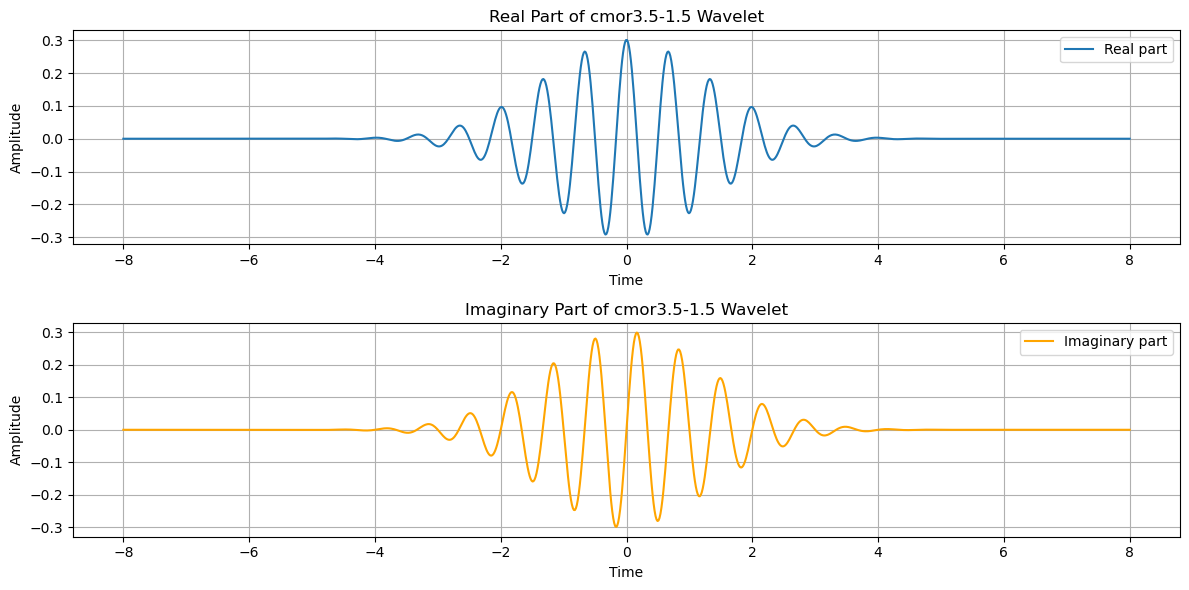} 
\caption{Real (top) and imaginary (bottom) parts of Morlet wavelet with $B=3.5$ and $C=1.5$} 
\label{fig:morlet} 
\end{figure}


Our sounds are discrete functions that take $T=32,000$ values. The discretization of \eqref{CWT} adapted to our setting is 
\begin{equation} \label{CWT-discrete}
	W f(\delta, s) = \frac{2}{T}\sum_{t=0}^{T-1} f(t) \frac{1}{\sqrt{\delta}} \overline{\Psi\left(\frac{t-s}{\delta}\right)}\,,
\end{equation}
for $s=0, \dots, T-1$ and $\delta = 1, \dots, T$ . This computation gives a matrix of size $T \times T$. To obtain functions defined on $\operatorname{Aff}(\mathbb{F}_p)$ we extract from this large matrix a submatrix of $(p-1)\times p$ entries. The submatrix is obtained by $p$-downsampling. That is, the values we obtain are 
\begin{equation} \label{CWT-affine}
	W_p f(a, b) = W f(a\lfloor T / p \rfloor \,,\, b\lfloor T/p \rfloor )\,, \quad a=1, \dots, p-1\,, \ b= 0, \dots, p-1\,.
\end{equation}

We will give classifications results for $p=19, 31$, and $61$. The color maps of $10 \cdot \log_{10} |W_p f (a,b)|$ for $p=31$ and each of the preprocessed sounds of Figure \ref{fig:n1} are given in Figure \ref{fig:31}. Similar maps are obtained for $p=19$ and $p=61$ (See Figure \ref{fig:19-31-61} for the image denoted 'Bark 1' in Figure \ref{fig:n1} and all the values of $p$ tested).

%

%

\begin{figure}[h!] 
\centering
\includegraphics[width=\textwidth]{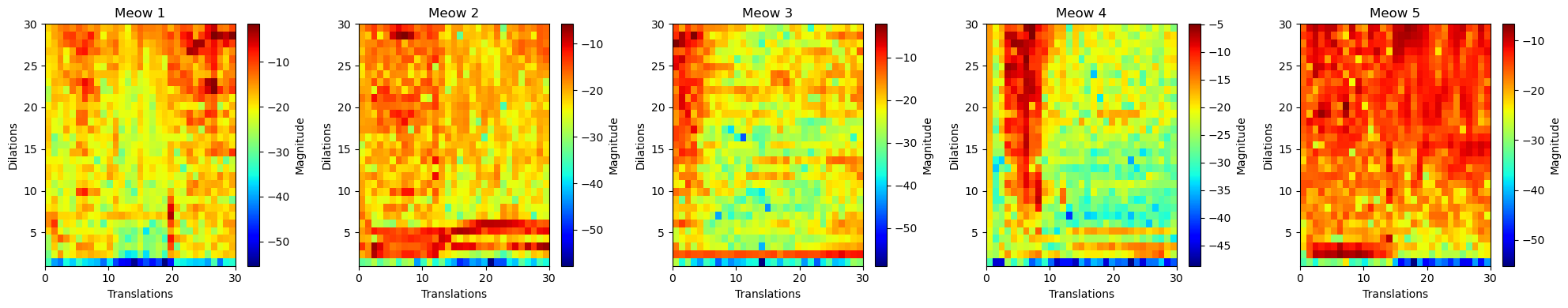} 
\includegraphics[width=\textwidth]{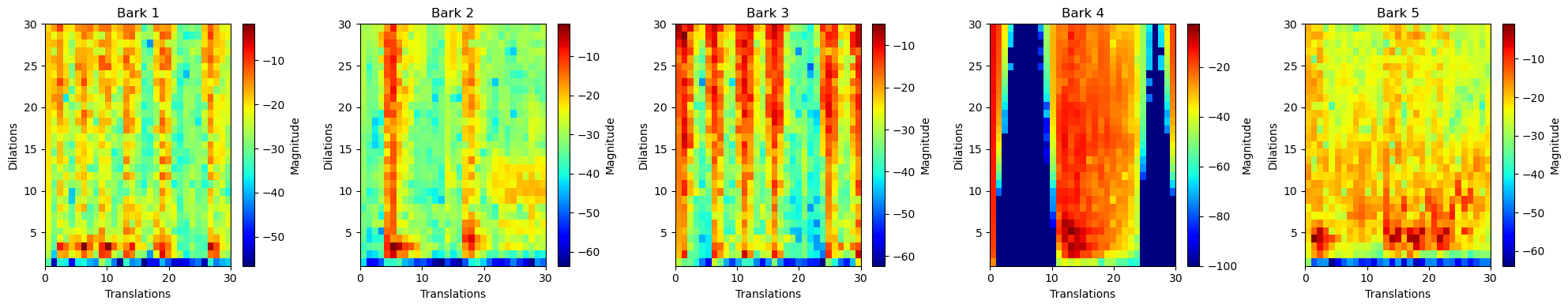} 
\caption{Color maps of $10 \cdot \log_{10} |W_p f (a,b)|$ for the preprocesed sounds of Figure \ref{fig:n1}  as functions on $Aff(\mathbb{F}_{31})$.}
\label{fig:31} 
\end{figure}

\begin{figure}[h!] 
\centering
\includegraphics[width=\textwidth]{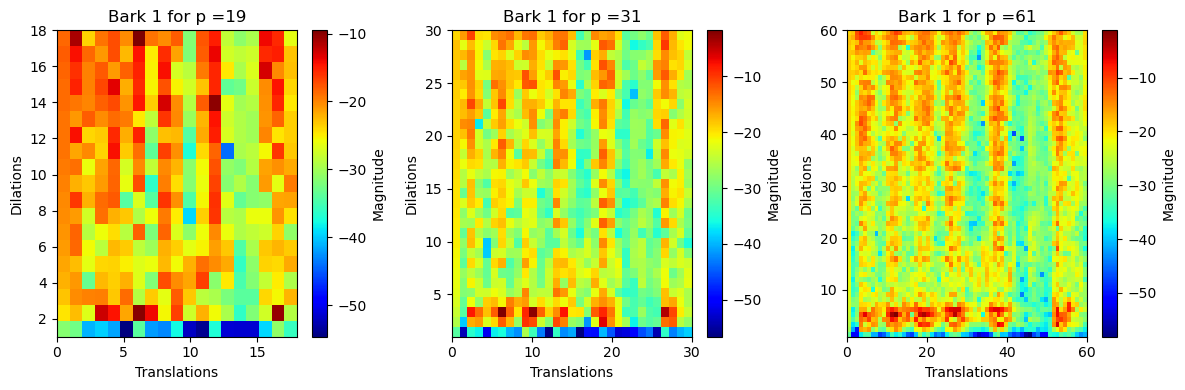} 
\caption{Color maps of $10 \cdot \log_{10} |W_p f (a,b)|$ of the sound denoted `Bark 1' in Figure \ref{fig:n1} as a function on $Aff(\mathbb{F}_{p})$ for different values of $p$.}
\label{fig:19-31-61} 
\end{figure}

Compare Figure \ref{fig:31} with the spectrograms given in Figure \ref{fig:spectrograms}. Both share same similarity in terms of the information they provide about the signal. The discrete wavelet transform \eqref{CWT-affine} captures localized features of the sounds as they evolve over time, similar as how the spectrogram captures energy distribution across different frequencies at different time instants.


\newpage

\subsubsection{The scattering}


Let us denote by $\mathcal S$ de set of all sounds in the dataset (50 meows and 46 barks) after the preprocessing. In subsection \ref{Subsec-6-2-2} we have described how to construct $96$ functions (one for each sound) on $\operatorname{Aff}(\mathbb{F}_{p})$, which we have called $W_p f (a,b), f\in S$ (see \eqref{CWT-affine}).

To perform a scattering transform we choose a kernel $\{\gamma_j: \{0,\dots p-1\} \to \mathbb R \}_{0\leq j \leq 2}$. To learn a kernel we take a training set $\mathcal M$ of 20 meows and a training set $\mathcal B$ of 20 barks, leaving the rest $\mathcal S \setminus (\mathcal M \cup \mathcal B)$ to test de accuracy of the scattering transform with the kernel designed.

For each character $\chi^k\,, \ 0 \leq k \leq p-1$ of $\operatorname{Aff}(\mathbb{F}_{p})$, define
$$
C_{\mathcal B(k)} := \frac{1}{|\mathcal B|} \sum_{f\in \mathcal B} |\langle W_p f \,, \,\chi^k \rangle |\,,
$$
and
$$
C_{\mathcal M(k)}:= \frac{1}{|\mathcal M|} \sum_{f\in \mathcal M} |\langle W_p f \,, \,\chi^k \rangle |\,.
$$
That is, for each training group we perform the arithmetic mean fo the absolute value of the inner product of $W_p f$ and $\chi^k.$

The values of $C_{\mathcal B(k)}$ (red dots) and $C_{\mathcal M(k)}$ (blue dots), $0 \leq k \leq p-1$, for $p=31$ are given in Figure \ref{fig:averagecharacters}\,. Similar results are obtained for $p=19$ and $p=61$. Observe that the values of $C_{\mathcal B(k)}$ and $C_{\mathcal M(k)}$ given in Figure \ref{fig:averagecharacters} are small. The following result gives an estimate for these values.

\begin{figure}
    \centering
    \includegraphics[width=0.85\linewidth]{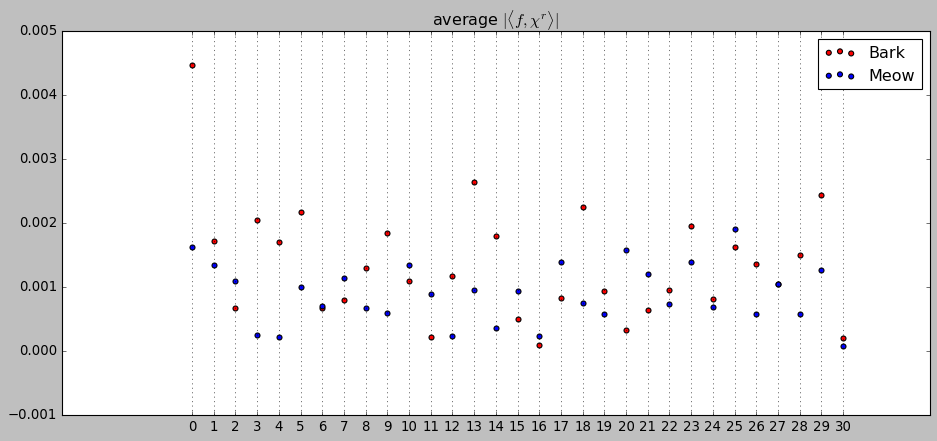}
    \caption{values of $C_{\mathcal B(k)}$ (red dots) and $C_{\mathcal M(k)}$ (blue dots), $0 \leq k \leq p-1$, for $p=31$.}
    \label{fig:averagecharacters}
\end{figure}

\begin{lemma}\label{lem:estimate}
For any $f\in S$, any $p$ prime, and any character $\chi^k\,, 0\leq k \leq p-1$, of $\operatorname{Aff}(\mathbb{F}_{p})$,
$$
|\langle W_p f \,, \, \chi^k \rangle | \leq \frac{1}{\sqrt{2}} \, \frac{1}{(2\pi B)^{1/4}}\,,
$$
where $B$ is the decay of the Morlet wavelet (see \eqref{eq:morlet}). Consequently, for all \, $0\leq k \leq p-1$,
$$
C_{\mathcal B(k)} \leq \frac{1}{\sqrt{2}} \, \frac{1}{(2\pi B)^{1/4}}\,, \qquad \text{and} \qquad C_{\mathcal M(k)} \leq \frac{1}{\sqrt{2}} \, \frac{1}{(2\pi B)^{1/4}}\,.
$$
\end{lemma}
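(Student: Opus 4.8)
The plan is to split the proof into a purely group-theoretic reduction and a short analytic estimate. Write $G=\operatorname{Aff}(\mathbb F_p)$ and equip $L^2(G)$ with the inner product for which the constant function $1$ has norm $1$, so that $\langle W_p f,\chi^k\rangle$ is exactly this inner product. Since each $\chi^k$ is the character of an irreducible representation, the first orthogonality relation for characters — the same one used in Section~\ref{ex:oo} via \cite[Chapt. 15, Theorem 3]{terras_1999} to finish the character table — gives $\|\chi^k\|_{L^2(G)}=1$ for every $0\le k\le p-1$ (for $k\le p-2$ because $|\chi^k|\equiv 1$, and for $k=p-1$ because $(p-1)^2+(p-1)=p(p-1)=|G|$). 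Cauchy--Schwarz, together with the fact that $L^2(G)$ here carries a probability measure, then yields
\[
|\langle W_p f,\chi^k\rangle|\ \le\ \|W_p f\|_{L^2(G)}\ \le\ \|W_p f\|_{L^\infty(G)}\ =\ \max_{(a,b)}\big|Wf\big(a\lfloor T/p\rfloor,\,b\lfloor T/p\rfloor\big)\big|,
\]
the last equality by \eqref{CWT-affine}. Thus it is enough to establish the uniform pointwise bound $|Wf(\delta,s)|\le\tfrac1{\sqrt2}(2\pi B)^{-1/4}$ for every dilation $\delta\ge1$ and every shift $s$.

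For this I would apply Cauchy--Schwarz directly to the finite sum \eqref{CWT-discrete}, with respect to the measure assigning mass $2/T$ to each of the $T$ sample points (the Riemann weight of the two-second window):
\[
|Wf(\delta,s)|\ \le\ \Big(\tfrac2T\sum_{t=0}^{T-1}|f(t)|^2\Big)^{1/2}\,\Big(\tfrac2T\cdot\tfrac1\delta\sum_{t=0}^{T-1}\big|\Psi\big(\tfrac{t-s}{\delta}\big)\big|^2\Big)^{1/2}.
\]
The first factor is at most $\big(\tfrac2T\cdot\tfrac T4\big)^{1/2}=\tfrac1{\sqrt2}$, since the normalisation \eqref{normalization} forces $|f(t)|\le\tfrac12$. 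In the second factor, $\tfrac1\delta\sum_t|\Psi(\tfrac{t-s}{\delta})|^2$ is a Riemann sum of mesh $1/\delta$ for $\int_{\mathbb R}|\Psi|^2=\|\Psi\|_{L^2(\mathbb R)}^2=(2\pi B)^{-1/2}$ (the $L^2(\mathbb R)$-norm recorded just after \eqref{eq:morlet}; equivalently, the dilation $\Psi\mapsto\tfrac1{\sqrt\delta}\Psi(\cdot/\delta)$ preserves that norm), so this factor is at most $\big(\tfrac2T(2\pi B)^{-1/2}\big)^{1/2}=\sqrt{2/T}\,(2\pi B)^{-1/4}\le(2\pi B)^{-1/4}$ because $T\ge2$. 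Multiplying the two factors gives the pointwise bound, hence $|\langle W_p f,\chi^k\rangle|\le\tfrac1{\sqrt2}(2\pi B)^{-1/4}$; the bounds for $C_{\mathcal B(k)}$ and $C_{\mathcal M(k)}$ follow immediately, each being an arithmetic mean over a finite set of quantities all obeying this bound.

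The only step requiring genuine care — the main obstacle — is the Riemann-sum inequality $\tfrac1\delta\sum_{t=0}^{T-1}|\Psi(\tfrac{t-s}{\delta})|^2\le(2\pi B)^{-1/2}$, since a Riemann sum of a general $L^2$ integrand can exceed the integral. Here the integrand $|\Psi(u)|^2=\tfrac1{\pi B}e^{-2u^2/B}$ is an explicit Gaussian, and I would handle it by Poisson summation: extending the sum to all of $\mathbb Z$ (which only enlarges it, as the summand is nonnegative),
\[
\tfrac1\delta\sum_{t\in\mathbb Z}\big|\Psi\big(\tfrac{t-s}{\delta}\big)\big|^2=(2\pi B)^{-1/2}\sum_{n\in\mathbb Z}e^{-\pi^2 n^2\delta^2 B/2}\,e^{-2\pi i ns},
\]
whose modulus is at most $(2\pi B)^{-1/2}\big(1+2\sum_{n\ge1}e^{-\pi^2 n^2\delta^2 B/2}\big)$; for $B=3.5$ and $\delta\ge1$ the correction is less than $10^{-7}$, and since in \eqref{CWT-affine} one has $\delta=a\lfloor T/p\rfloor$ in the hundreds it is negligible — in any event it is absorbed by the factor $\sqrt{2/T}\le1$ left over above. (One can instead avoid Fourier analysis altogether by the elementary integral-comparison estimate for a unimodal summand.) Everything else is routine bookkeeping; note that the resulting bound is far from sharp, consistent with the much smaller values displayed in Figure~\ref{fig:averagecharacters}.
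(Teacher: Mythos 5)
Your proof is correct and follows essentially the same route as the paper: Cauchy--Schwarz on the discrete wavelet sum to get the pointwise bound $|W_pf(a,b)|\le \tfrac1{\sqrt2}(2\pi B)^{-1/4}$ (using $|f|\le\tfrac12$ for the first factor and $\|\Psi\|_{L^2(\mathbb R)}$ for the second), followed by Cauchy--Schwarz on $L^2(G)$ with $\|\chi^k\|=1$ from the orthogonality relations. The only difference is that where the paper simply says the Riemann sum is ``bounded by an integral,'' you justify this via Poisson summation and absorb the exponentially small correction into the spare factor $\sqrt{2/T}$ --- a welcome tightening of a step the paper leaves informal, but not a different argument.
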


\begin{proof}
By the Cauchy-Schwarz inequality, for all $(a,b)\in \operatorname{Aff}(\mathbb{F}_{p})$,
\begin{align} \label{DB-1}
	|W_p f(a,b)| &= \left|\frac{2}{T}\sum_{t=0}^{T-1} f(t) \frac{1}{\sqrt{a \lfloor T/p \rfloor}}\Psi\left( \frac{t - b \lfloor T/p \rfloor}{a \lfloor T/p \rfloor}  \right)\right| \nonumber \\
	&\leq \left(\frac{2}{T} \sum_{t=0}^{T-1} |f(t)|^2 \, dt \right)^{1/2} \left(\frac{2}{T} \sum_{t=0}^{T-1} \frac{1}{a \lfloor T/p \rfloor} \left|\Psi\left( \frac{t - b \lfloor T/p \rfloor}{a \lfloor T/p \rfloor}  \right) \right|^2 \right)^{1/2}\,.
\end{align}
Since $|f(t)|\leq 1/2$,
\begin{equation*} 
	\left(\frac{2}{T} \sum_{t=0}^{T-1} |f(t)|^2 \, dt \right)^{1/2} \leq \frac{1}{2}\left( \frac{2}{T} T\right)^{1/2} = \frac{1}{\sqrt{2}}\,.
\end{equation*}
After bounding the sum by an integral and doing a change of variables, the second factor of the right hand side of \eqref{DB-1} can be bound by   $ \|\Psi\|_{L^2(\mathbb R)} =\displaystyle \frac{1}{(2\pi B)^{1/4}}.$ Therefore,
\begin{equation} \label{DB-2}
	|W_p f(a,b)| \leq \frac{1}{\sqrt{2}} \frac{1}{(2\pi B)^{1/4}}\,.
\end{equation} 
Use, temporarily, the notation $G= \operatorname{Aff}(\mathbb{F}_{p})$. By the Cauchy-Schwarz inequality and \eqref{DB-2},
	\begin{align} \label{DB-3}
	|\langle W_p f, \chi^k \rangle | &= \left|\frac{1}{|G|} \sum_{(a,b)\in G} W_p f(a,b)\, \overline{\chi^k (a,b)} \right|\nonumber \\
	&\leq \left(\frac{1}{|G|} \sum_{(a,b)\in G} |W_p f(a,b)|^2 \right)^{1/2} \left(\frac{1}{|G|} \sum_{(a,b)\in G} |\chi^k(a,b)|^2 \right)^{1/2} \nonumber \\
	&\leq \frac{1}{\sqrt{2}} \frac{1}{(2\pi B)^{1/4}} \left(\frac{1}{|G|} \sum_{(a,b)\in G} |\chi^k(a,b)|^2 \right)^{1/2} \,.
\end{align}
The last factor in the right hand side of \eqref{DB-3} is $1$ (see \cite[Chapt. 15, Theorem 3]{terras_1999} or use Table \ref{table: chars AffFp}). This proves the result.
\end{proof}

	\begin{remark} \label{Remark-6-1}
		For $B=3.5$ in the Morlet wavelet \eqref{eq:morlet}, the upper bound for $C_{\mathcal B(k)}$ and  $C_{\mathcal M(k)}$ given in Lemma \ref{lem:estimate} is $0,3265$. Figure \ref{fig:averagecharacters} shows smaller values, probably due to the fact that our estimates do not take into account de possible cancellations in the computation of $W_f(a,b).$
	\end{remark}

For $k=0,1, \dots, p-1$, define
\begin{equation*}
	\gamma_1(k)=\begin{cases}
		C_{\mathcal B} (k), \text{ if }  C_{\mathcal B} (k) \geq C_{\mathcal M} (k)\\
		0, \text{ if }  C_{\mathcal B} (k) < C_{\mathcal M} (k),
	\end{cases}
\end{equation*}
and
\begin{equation*}
	\gamma_2(k)=\begin{cases}
		0, \text{ if }  C_{\mathcal B} (k) \geq C_{\mathcal M} (k)\\
		- C_{\mathcal M} (k), \text{ if }  C_{\mathcal B} (k) < C_{\mathcal M} (k).
	\end{cases}
\end{equation*}
By Lemma \ref{lem:estimate},
$$
|\gamma_1(k)|^2 + |\gamma_2(k)|^2 = \max\{|C_{\mathcal B} (k)|^2 , |C_{\mathcal M} (k)|^2 \} \leq  \frac{1}{2} \frac{1}{\sqrt{2\pi B})}\,,
$$
which is smaller than $1$ if $B > 1/(8\pi)$. Finally, for the kernel to satisfy \eqref{multir} we take
$$
\gamma_0 (k) = \sqrt{1 - (|\gamma_1(k)|^2 + |\gamma_2(k)|^2)}\,, \quad k=0, 1, \dots, p-1\,.
$$
Then, we use
$$
\phi(a,b) = \sum_{k=0}^{p-1} \gamma_0(k) \chi^k(a,b)
$$
as low pass filter, and
$$
\psi_j (a,b) = \sum_{k=0}^{p-1} \gamma_j(k) \chi^k(a,b)\,, \quad j=1,2
$$
as wavelets to implement the $G$-scattering transform in $\operatorname{Aff}(\mathbb{F}_{p})$\,.

The $G$-scattering transform is computed with the wavelets just defined over paths of depths $M=0, 1, 2$. For $M=0$ the data is $S[\emptyset] W_p f = \phi\ast W_p f$ which is a function of $p(p-1)$ values. For $M=1$ the data is
$$
S[\emptyset] W_p f\,, \quad S[1] W_p f = \phi\ast |\psi_1\ast W_p f|\,, \quad S[2] W_p f = \phi\ast |\psi_2\ast W_p f|\,.
$$
which is a function of $3p(p-1)$ values. For $M=2$ the data is
$$
S[\emptyset] W_p f\,, \quad S[1] W_p f \,, \quad S[2] W_p f \,, \quad S[1,1] W_p f = \phi\ast |\psi_1\ast|\psi_1\ast W_p f||\,, \quad S[1,2] W_p f = \phi\ast |\psi_2\ast|\psi_1\ast W_p f||\,,
$$
$$
\quad S[2,1] W_p f = \phi\ast |\psi_1\ast|\psi_2\ast W_p f||\,, \quad S[2,2] W_p f = \phi\ast |\psi_2\ast|\psi_2\ast W_p f||\,.
$$

	\begin{remark} \label{Remark-6-2}
		Empirically we observe that the energy of the elements fo the $G$-scattering transform at deeper layers is very small, as predicted by Corollary \ref{corollary2}. For example, for $B=3.5$ and $C=1.5$ in Morlet wavelet \eqref{eq:morlet} the rate of decay estimated with the theoretical bounds of Lemma \ref{lem:estimate} is $\alpha \leq 0.1066$ (see Remark \ref{Remark-6-1}).
	\end{remark}


\subsubsection{Classification and Results}

%
%
%

For classification we use a Support Vector Machine (SVM). The SVM is trained with the data $\mathcal B \cup \mathcal M$ (20 Barks and 20 Meows), and the accuracy is tested on the rest of the sounds of $\mathcal S$.

\


Table \ref{tab:results-Aff} gives the accuracy over the test data of the SVM classifier trained with just the set  $\{ W_p f \}_{\mathcal B \cup \mathcal M}$ with $M=0, 1, 2$ levels as described at the end of the previous subsection and $p=19, 31, 61$.

\begin{table}[h]
    \centering
    \begin{tabular}{|c|c|c|c|c|c|} \hline 
         p&  Only signals on $\operatorname{Aff}(\mathbb F_p)$ &  Scattering 1 Layer&  Scattering 2 Layers \\ \hline 
         19&  41,61\%&  43,75\%&  62,5\%  \\ \hline 
         31&  43,75\%&  68,75\%&  68,75\% \\ \hline 
         61&  50\%&  75\%& \textbf{ 87,5\%} \\ \hline
    \end{tabular}
    \caption{Accuracy of the classification for the Bark and Meow files using G-scattering}
    \label{tab:results-Aff}
\end{table}

\

Although the results are not as good as in the MNIST case (see Table \ref{tab:results}) , the accuracy of the $G$-scattering with 2 layers and $p=61$ ($87,5\%$) is a big improvement over the $41,67\%$ accuracy of SVM applied directly to the data represented on $\operatorname{Aff}(\mathbb{F}_p)$. It may happen that the number of sounds is small. It remains to be tested over a larger amount of sounds. Also, it remains to investigate which type of kernels, for a given group $\operatorname{Aff}(\mathbb{F}_p)$, are more suitable to represent the data.

\subsection{Classification of functions on symmetric groups}\label{symmetricMoreover}

In this section we apply the scattering transform to classify different classes of functions on the symmetric groups $S_3, S_5$ and $S_6$.

\subsubsection{Distinguishing distances on $S_3$ and $S_5$}
In this example, we classify functions arising from various metrics on the symmetric groups $S_3$ and $S_5$. Each class corresponds to a family of functions of the form ${d_j(\pi, \cdot): \tau \mapsto d_j(\pi, \tau) \mid \pi \in S_n}$, where $d_j$ denotes one of the following standard permutation distances (cf. \cite{BUCHHEIM2009962}):
\begin{itemize}
\item $d_1:$ the Hamming distance between two permutations $\pi$ and $\tau$ is the number of different entries, i.e., $ d_1(\pi,\tau)=|\{i \mid \pi(i) \neq \tau(i)\}|$,
\item $d_2:$ the Cayley distance is defined as the minimum number of transpositions taking $\pi$ to $\tau$,
\item $d_3:$ for $p \geq 1$, the $l_p$ distance is defined by $\sqrt[p]{\sum_{i=1}^n|\pi(i)-\tau(i)|^p}$,
\item $d_4:$ the $l_{\infty}$ distance is defined as $\max _{1 \leq i \leq n}|\pi(i)-\tau(i)|$,
\item $d_5:$ the Kendall's tau is the minimum number of pairwise adjacent transpositions taking $\pi$ to $\tau$,
\item $d_6:$ the Ulam's Distance is $n$ minus the length of a longest increasing subsequence in $\left(\tau \pi^{-1}(1), \ldots, \tau \pi^{-1}(n)\right)$.
\item $d_7:$ the Lee Distance is $\sum_{i=1}^n \min (|\pi(i)-\tau(i)|, n-|\pi(i)-\tau(i)|)$
\end{itemize}

The characters are constant in the equivalence classes, which are exactly the cycle types. In the character tables below (see e.g. \cite{GibsonChar}), the columns are labelled by partitions, written in compact notation (eg. $\lambda = \left[4^1 1^1\right]$ means the partition $\lambda = (4, 1)$ of $5$).
\begin{table}[h!]
\centering \label{table: chars S_3}
\begin{tabular}{|c|c|c|c|}
\hline
num elts & 1 & 3 & 2 \\
\hline
class & $\left[1^3\right]$ & $\left[2^1 1^1\right] $ & $\left[3^1\right]$ \\
\hline
$\chi^1 $ & 1 & 1 & 1 \\
\hline
$\chi^2 $ & 2 & 0 & -1 \\
\hline
$\chi^3 $ & 1 & -1 & 1 \\
\hline
\end{tabular}
\end{table}
\begin{table}[h]
\centering
\begin{tabular}{|c|c|c|c|c|c|c|c|}
\hline
\text{num elts} & 1 & 10 & 15 & 20 & 20 & 30 & 24 \\
\hline class & $\left[1^5\right]$ & $\left[2^1  1^3\right]$ & $\left[2^2 1^1\right]$ & $\left[3^1 1^2\right]$ & $\left[3^1 2^1\right]$ & $\left[4^1 1^1\right]$ & $\left[5^1\right]$ \\
\hline $\chi^1$ & 1 & 1 & 1 & 1 & 1 & 1 & 1 \\
\hline$\chi^2$ & 1 & -1 & 1 & 1 & -1 & -1 & 1 \\
\hline$\chi^3$ & 4 & 2 & 0 & 1 & -1 & 0 & -1 \\
\hline$\chi^4$ & 4 & -2 & 0 & 1 & 1 & 0 & -1 \\
\hline$\chi^5$ & 5 & -1 & 1 & -1 & -1 & 1 & 0 \\
\hline$\chi^6$ & 5 & 1 & 1 & -1 & 1 & -1 & 0 \\
\hline$\chi^7$ & 6 & 0 & -2 & 0 & 0 & 0 & 1 \\
\hline
\end{tabular}
\caption{Character Table of $S_3$ and $S_5$}\label{table: chars S_5}
\end{table} 
	
\paragraph{Classification on $S_3$.}
The symmetric group $S_3$, with six elements, is the smallest non-abelian group. The distances from the identity under each of the considered metrics are summarized in Table \ref{tab:s3}. Note that Hamming and Lee distances coincide in this case.

We define six equivalence classes (corresponding to $d_1$ through $d_6$), each containing six functions. Thus, we have $36$ total functions. We partition these evenly into a training set and a test set, assigning three functions per class to each.  We ensure that the canonical representatives $d_j((1), \cdot)$ giving the distance to the identity are included in the training set for each class.
\begin{table}[h]
	\centering
	\begin{tabular}{|c|c|c|c|c|c|c|c|} \hline 
		Element & Hamming  & Cayley  & $\ell_2$  & $\ell_\infty$  & Lee  & Kendall  & Ulam\\ \hline 
		$(1,2,3)$ & 0 &0  &0  &0  &0  &0  &0 \\ \hline 
		$(2,1,3)$& 2 &1  &$\sqrt{2}$  &1  &2  &1  &1 \\ \hline 
		$(3,2,1)$&2  &1  &$2\sqrt{2}$  &  2&2  &3  &2\\ \hline 
		$(1,3,2)$&  2 &1  &$\sqrt{2}$  &  1&2  &1  &1 \\ \hline 
		$(2,3,1)$& 3 &2  &$\sqrt{6}$  &2  &3  &2  &1 \\ \hline 
		$(3,1,2)$&3  &2  &$\sqrt{6}$  &2  &3  &2  &1 \\ \hline
	\end{tabular}
	\caption{Distance to $(1,2,3)$.}
	\label{tab:s3}
\end{table}
To classify these distance functions, we construct a scattering transform with $J = 6$ with depths 1 and 2.  To learn the kernel, we use all the functions from the training set. For $j\in \{1,\ldots, 6\}$, let $$\tilde{d_j}(\cdot)=\frac{1}{\max_{\pi\in S_3}d_j((1),\pi)}d_j((1),\cdot)\,,$$ be the normalized distance to the identity. We define 
\begin{equation}\label{kerj}
\gamma_j(r) = \frac{1}{\sqrt{J}} |\langle \tilde{d_j},\chi^r \rangle|\,.
\end{equation}
Note that by the Cauchy-Schwarz inequality,
$$\sum_{j=1}^6|\gamma_j(r)|^2 = \frac{1}{J}\sum_{j=1}^J|\langle \tilde{d_j},\chi^r \rangle|^2\leq \frac{1}{J}\sum_{j=1}^{J}\|\tilde{d_j}\|^2<1\,,$$ 
so we can define
\begin{equation}\label{ker0}
\gamma_0(r)=\sqrt{1-\sum_{j=1}^6|\gamma_j(r)|^2}. 
\end{equation}

A one-vs-all SVM is trained on the scattering features derived from the training set. The classification accuracy on the test set, using original functions and scattering features of depths 1 and 2, is summarized in Table \ref{tab:accuracy1}.

\begin{table}[h]
    \centering
    \begin{tabular}{|c|c|c|} \hline 
        original funcs & Scattering 1 layer & Scattering 2 layers \\ \hline 
         $9/18$& $15/18$& $16/18$\\ \hline
    \end{tabular}
    \caption{Accuracy on $S_3$}
    \label{tab:accuracy1}
\end{table}

\paragraph{Classification on $S_5$.}
The symmetric group $S_5$ contains $120$ elements. For each of the seven previously defined distances, we construct a corresponding class of functions of the form $\tau \mapsto d_j(\pi, \tau)$, where $\pi \in S_5$ and $d_j$ is one of the distance metrics. This yields a dataset with $7 \cdot 120 = 840$ functions, partitioned into seven function classes of equal size. We divide the dataset evenly into training and test sets, preserving class balance. As before, we ensure that the canonical representatives $d_j((1), \cdot)$, are included in the training set for each class.
We apply the same strategy for feature construction as in the $S_3$ case. That is, we design scattering features derived from the kernel build as in equations \eqref{kerj} and \eqref{ker0} but for $j\in\{1, \ldots, 7\}$.
An SVM classifier is trained using a one-vs-all scheme, where the input features are those obtained from the scattering transform of depth up to 2. The accuracy of the classifier, evaluated on the test set under various configurations: original functions, scattering features with depth 1, and depth 2, is summarized in Table \ref{tab:accuracy2}.

\begin{table}[h]
    \centering
    \begin{tabular}{|c|c|c|} \hline 
        original funcs & Scattering 1 layer & Scattering 2 layers \\ \hline 
         $211/420$ & $336/420$ & $420/420$\\ \hline
    \end{tabular}
    \caption{Accuracy on $S_5$}
    \label{tab:accuracy2}
\end{table}


\subsubsection{Distinguishing random functions on $S_6$}
In this experiment, we consider the classification of left translations of three randomly generated functions on the symmetric group $S_6$. The group $S_6$ contains $720$ elements and has $11$ irreducible representations; its character table is provided in Table \ref{table: chars S_6}.

For each $i \in {1, 2, 3}$, let $f_i : S_6 \to \mathbb{R}$ be a function defined by assigning to each $\pi \in S_6$ a value sampled independently and uniformly from the interval $[-0.5, 0.5]$. The class associated with $f_i$ is then defined as the orbit under left translation: $$C_i = \{L_\tau f_i | \tau \in S_6\}$$
where $(L_\tau f_i)(\pi) := f_i(\tau^{-1} \pi)$. Each such orbit contains $720$ distinct functions, yielding a total of $3 \cdot 720 = 2160$ samples across all classes.

As in previous examples, the dataset is evenly split into training and test sets, maintaining class balance. We also ensure that the original functions $f_1$, $f_2$, and $f_3$ themselves are included in the training set.
\begin{table}[H]
    \centering
\begin{tabular}{|c|c|c|c|c|c|c|c|c|c|c|c|}
\hline Class & {$\left[1^6\right]$} & {$\left[2^1 1^4\right]$} & {$\left[2^2 1^2\right]$} & {$\left[2^3\right]$} & {$\left[3^1 1^3\right]$} & {$\left[3^1 2^1 1^1\right]$} & {$\left[3^2\right]$} & {$\left[4^1 1^2\right]$} & {$\left[4^1 2^1\right]$} & {$\left[5^1 1^1\right]$} & {$\left[6^1\right]$} \\
\hline num elts & 1 & 15 & 45 & 15 & 40 & 120 & 40 & 90 & 90 & 144 & 120 \\
\hline $\chi^1$ & 1 & 1 & 1 & 1 & 1 & 1 & 1 & 1 & 1 & 1 & 1 \\
\hline $\chi^2$ & 5 & 3 & 1 & -1 & 2 & 0 & -1 & 1 & -1 & 0 & -1 \\
\hline $\chi^3$ & 9 & 3 & 1 & 3 & 0 & 0 & 0 & -1 & 1 & -1 & 0 \\
\hline $\chi^4$ & 10 & 2 & -2 & -2 & 1 & -1 & 1 & 0 & 0 & 0 & 1 \\
\hline $\chi^5$ & 5 & 1 & 1 & -3 & -1 & 1 & 2 & -1 & -1 & 0 & 0 \\
\hline $\chi^6$ & 16 & 0 & 0 & 0 & -2 & 0 & -2 & 0 & 0 & 1 & 0 \\
\hline $\chi^7$ & 10 & -2 & -2 & 2 & 1 & 1 & 1 & 0 & 0 & 0 & -1 \\
\hline $\chi^8$ & 5 & -1 & 1 & 3 & -1 & -1 & 2 & 1 & -1 & 0 & 0 \\
\hline $\chi^9$ & 9 & -3 & 1 & -3 & 0 & 0 & 0 & 1 & 1 & -1 & 0 \\
\hline $\chi^{10}$ & 5 & -3 & 1 & 1 & 2 & 0 & -1 & -1 & -1 & 0 & 1 \\
\hline $\chi^{11}$ & 1 & -1 & 1 & -1 & 1 & -1 & 1 & -1 & 1 & 1 & -1 \\
\hline
\end{tabular}
\caption{Character Table of $S_6$}\label{table: chars S_6}
\end{table}

We build a Scattering with $J=3$ up to depth $1$. The kernels are $\gamma_j(r) = |\langle f_j,\chi^r \rangle|$, for $j \in \{1,2,3\}$ and $\gamma_0(r)=\sqrt{1-\sum_{j=1}^3|\gamma_j(r)|^2}$. We employ a one-vs-all support vector machine (SVM) trained on the scattering features to distinguish between the three classes. When evaluated on the test set, the classifier achieves an accuracy of approximately $67.03\%$ using first layer scattering features and $73.33\%$ using second layer scattering features, compared to $40.00\%$ when trained directly on the original function values. These results are summarized in Table~\ref{tab:accuracy3}.

\begin{table}[H]
    \centering
    \begin{tabular}{|c|c|c|} \hline 
        original funcs & Scattering 1 layer & Scattering 2 layers \\ \hline 
         $432/1080$ & $724/1080$ & $792/1080$\\ \hline
    \end{tabular}
    \caption{Accuracy on $S_6$}
    \label{tab:accuracy3}
\end{table}

%
%
%
%
%
%

 
\bibliographystyle{abbrv} 
\bibliography{refs.bib}

\end{document}